\newtheorem{theorem}{Theorem}
\newtheorem{lemma}{Lemma}
\newtheorem{proposition}{Proposition}
\newtheorem{corollary}{Corollary}
\theoremstyle{definition}
\theoremstyle{remark}
\newtheorem{remark}{Remark}
\numberwithin{equation}{section}
\newcommand{\ds}{\displaystyle}
\newcommand{\dv}{\text{div}}
\begin{document}
\title[A characterization of fast decaying solutions]{A characterization of fast decaying solutions for quasilinear and Wolff type systems with singular coefficients}


\author{John Villavert}
\address{Department of Mathematics, University of Oklahoma, Norman, OK 73019, USA}
\curraddr{}
\email{villavert@math.ou.edu,john.villavert@gmail.com}
\thanks{}

\subjclass[2010]{Primary 35B40, 45G05, 45M05; Secondary: 35J92} 

\keywords{Fast decay rates; quasilinear system; weighted Hardy--Littlewood--Sobolev inequality; Wolff potential}

\date{}

\dedicatory{}



\begin{abstract}
This paper examines the decay properties of positive solutions for a family of fully nonlinear systems of integral equations containing Wolf potentials and Hardy weights. This class of systems includes examples which are closely related to the Euler--Lagrange equations for several classical inequalities such as the Hardy--Sobolev and Hardy--Littlewood--Sobolev inequalities. In particular, a complete characterization of the fast decaying ground states in terms of their integrability is provided in that bounded and fast decaying solutions are shown to be equivalent to the integrable solutions. In generating this characterization, additional properties for the integrable solutions, such as their boundedness and optimal integrability, are also established. Furthermore, analogous decay properties for systems of quasilinear equations of the weighted Lane--Emden type are also obtained.
\end{abstract}

\maketitle

\section{Introduction}

In this paper, we examine the decay properties of positive solutions at infinity for the following class of integral systems with variable coefficients involving the Wolff potentials and Hardy weights,
\begin{equation}\label{Wolff}
  \left\{\begin{array}{l}
    u(x) = c_{1}(x)W_{\beta,\gamma}(|y|^{\sigma_1}v^q)(x), \\
    v(x) = c_{2}(x)W_{\beta,\gamma}(|y|^{\sigma_2}u^p)(x).
  \end{array}
\right.
\end{equation}
Here, the Wolff potential of a function $f$ in $L^{1}_{loc}(\mathbb{R}^n)$ is defined by
\begin{equation*}
W_{\beta,\gamma}(f)(x) = \int_{0}^{\infty} \Big( \frac{\int_{B_{t}(x)} f(y)\,dy}{t^{n-\beta\gamma}} \Big)^{\frac{1}{\gamma - 1}} \,\frac{dt}{t},
\end{equation*}
where $n \geq 3$, $\gamma > 1$, $\beta > 0$ with $\beta\gamma < n$, and $B_{t}(x) \subset \mathbb{R}^n$ denotes the ball of radius $t$ centered at $x$. Additionally, we take $p,q > 1$, $\sigma_i \leq 0$ and assume the coefficients $c_{1}(x)$ and $c_{2}(x)$ are double bounded functions i.e. there exists a positive constant $C>0$ such that $C^{-1} \leq c_{i}(x) \leq C$ for all $x \in \mathbb{R}^n$. The goal of this paper is to determine the sufficient and necessary conditions that completely describe the fast decaying ground states of system \eqref{Wolff}. One motivation for studying the decay properties of solutions for these systems stems from the fact that it is an important ingredient in the classification of solutions and in establishing Liouville type theorems. Another motivation originates from the study of the asymptotic behavior of solutions for elliptic equations. Namely, as we shall discuss below in greater detail, the integral systems we consider are natural generalizations of many elliptic equations, including the weighted equation
$$-\Delta u(x) = |x|^{\sigma}u(x)^p, \, x \in \mathbb{R}^n, \, \sigma > -2.$$
If $p > \frac{n+\sigma}{n-2}$ so that $n-2 > \frac{2+\sigma}{p-1}$, the authors in \cite{Lei13a,YLi92,LiNi88} established that ground states for this equation vanish at infinity with either the slow rate or the fast rate, respectively:
$$u(x) \simeq |x|^{-\frac{2+\sigma}{p-1}} \,\text{ or }\, u(x) \simeq |x|^{-(n-2)}.$$ 
Here, the notation $f(x) \simeq g(x)$ means there exist positive constants $c_1$ and $c_2$ such that $$c_{1}g(x) \leq f(x) \leq c_{2}g(x) \,\text{ as }\, |x|\longrightarrow\infty. $$
Hence, in a sense,  our results extend this example considerably since the Wolff potential has applications to many nonlinear problems and system \eqref{Wolff} includes several well-known cases. For instance, if $\beta = \alpha/2$ and $\gamma = 2$, the Wolff potential $W_{\beta,\gamma}(\cdot)$ becomes the Riesz potential $I_{\alpha}(\cdot )$ modulo a constant since 
\begin{align*}
W_{\frac{\alpha}{2},2}(f)(x) = {} & \int_{0}^{\infty} \frac{\int_{B_{t}(x)} f(y)\,dy}{t^{n- \alpha}}  \,\frac{dt}{t} 
= \int_{\mathbb{R}^n} f(y) \Big( \int_{|x-y|}^{\infty} t^{\alpha - n}\,\frac{dt}{t} \Big) \,dy \\
= {} & \frac{1}{(n-\alpha)}\int_{\mathbb{R}^n} \frac{f(y)\,dy}{|x-y|^{n-\alpha}} \doteq C(n,\alpha) I_{\alpha}(f)(x).
\end{align*}
Therefore, we can recover from \eqref{Wolff} the weighted version of the Hardy--Littlewood--Sobolev (HLS) system of integral equations:
\begin{equation}\label{wHLS}
  \left\{\begin{array}{l}
    u(x) = \ds \int_{\mathbb{R}^n} \frac{|y|^{\sigma_1}v(y)^q}{|x-y|^{n-\alpha}} \,dy, \medskip \\
    v(x) = \ds \int_{\mathbb{R}^n} \frac{|y|^{\sigma_2}u(y)^p}{|x-y|^{n-\alpha}} \,dy.
  \end{array}
\right.
\end{equation}
If $\sigma_i = 0$ with the critical condition
\begin{equation*}
\frac{1}{1 + q} + \frac{1}{1 + p} = \frac{n-\alpha}{n},
\end{equation*}
system \eqref{wHLS} comprises of the Euler--Lagrange equations for a functional associated with the sharp Hardy--Littlewood--Sobolev inequality. In the special case where $p=q=\frac{n+\alpha}{n-\alpha}$, Lieb classified all the maximizers for this functional, thereby obtaining the best constant in the HLS inequality. He then posed the classification of all the critical points of the functional, or the solutions of the HLS system, as an open problem \cite{Lieb83}. This conjecture on the classification of solutions was later addressed by Chen, Li and Ou \cite{CLO06} by introducing a version of the method of moving planes for integral equations (see also \cite{YYLi04} for an alternative proof via the method of moving spheres). If $\sigma_i = \sigma$, $p=q$ and $u \equiv v$, system \eqref{wHLS} reduces to the single integral equation
\begin{equation*}
u(x) = \int_{\mathbb{R}^n} \frac{|y|^{\sigma}u(y)^p}{|x-y|^{n-\alpha}} \,dy,
\end{equation*}
which is the Euler--Lagrange equation for the classical Hardy--Sobolev inequality when $\alpha = 2$ and $p = \frac{n+ 2 +2\sigma}{n-2}$. We refer the reader to \cite{Lei13a,LuZhu11} for further discussions and results on the asymptotic, symmetry, and regularity properties of solutions for this integral equation.

Interestingly, the Wolff type integral equations are also closely related to some well-known systems of differential equations. For example, if $\alpha = 2k$ is an even integer, system \eqref{wHLS} is equivalent, under the appropriate conditions (see \cite{CL13,Villavert:14b}), to the poly-harmonic system 
\begin{equation}\label{wPDE}
  \left\{\begin{array}{cl}
    (-\Delta)^{k}u(x) = |x|^{\sigma_1}v(x)^q, & x \in \mathbb{R}^n \backslash \{0\}, \\
    (-\Delta)^{k}v(x) = |x|^{\sigma_2}u(x)^p, & x \in \mathbb{R}^n \backslash \{0\}.
  \end{array}
\right.
\end{equation}
Recently, the study on the criteria governing the existence and non-existence of solutions for both differential and integral versions of the HLS type systems has received much attention, especially since Liouville type theorems are crucial in deriving a priori estimates and singularity and regularity properties of solutions for a class of nonlinear elliptic problems \cite{GS81apriori,PQS07}. More precisely, it is conjectured that either system \eqref{wHLS} or \eqref{wPDE} admits no positive solution in the subcritical case $\frac{n+\sigma_1}{1+q} + \frac{n+\sigma_2}{1+p} > n-\alpha$ (see \cite{Caristi2008,Fazly14,LGZ06,Mitidieri96,Phan12,Villavert:14c} for partial results). In the case where $\alpha=2$ and $\sigma_i = 0$, this is often referred to as the Lane--Emden conjecture and it too has only partial results. Namely, the result holds true for radial solutions (see \cite{Mitidieri96}) and for dimension $n\leq 4$ (see \cite{PQS07,SZ96,Souplet09}). On the other hand, the scalar analogue of this conjecture is classical and has a complete solution (see \cite{CGS89,CL91,YYLiZhu95}). Conversely, we refer the reader to \cite{Villavert:14b} (see also \cite{ChengLi14,Li13}) for existence results to system \eqref{wPDE} in the non-subcritical case $$\frac{n+\sigma_1}{1+q} + \frac{n+\sigma_2}{1+p} \leq n-2k.$$ 
The Wolff type integral systems are also closely related to many other notable differential equations. For instance, if $\beta = 1$, the equation 
$$ u(x) = W_{1,\gamma}(|y|^{\sigma}u^q)(x)$$
corresponds to the $\gamma$-Laplace equation
$$-\dv (|\nabla u|^{\gamma-2}\nabla u) = |x|^{\sigma}u(x)^q.$$
More generally, if $\beta = \frac{2k}{k+1}$ and $\gamma = k+1$, then the integral equation
$$ u(x) = W_{\frac{2k}{k+1},k+1}(|y|^{\sigma}u^q)(x)$$
corresponds to the $k$-Hessian equation $$F_{k}[-u] = |x|^{\sigma}u(x)^q, \, \text{ for }\, k=1,2,\ldots,n,$$
where
$$F_{k}[u] = S_{k}(\lambda(D^2 u)), \, \lambda(D^2 u) = (\lambda_1,\lambda_2,\ldots,\lambda_n),$$
and the $\lambda_i$'s are the eigenvalues of the Hessian matrix $D^{2}u$ and $S_{k}(\cdot)$ is the $k^{th}$ symmetric function
$$ S_{k}(\lambda) = \ds\sum_{1\leq i_i < \ldots < i_k \leq n} \lambda_{i_1}\lambda_{i_2}\cdots \lambda_{i_k}.$$
Notice that when $k=1$ and $k=n$, we recover the familiar second-order elliptic operators:
$$ F_{1}[u] = \Delta u \,\text{ and }\, F_{n}[u] =  \det(D^{2}u).$$
Let us also discuss previous works concerning integral systems involving the Wolff potentials. In particular, the qualitative properties of solutions for the unweighted version of system \eqref{Wolff} and its special cases have been studied by a number of authors. For instance, the authors in \cite{MCL11} studied the integrability and regularity of solutions, and the authors in \cite{Lei13b}-\cite{LLM12} and \cite{SunLei2012} examined the asymptotic and symmetry properties of solutions. Similar qualitative results were obtained in \cite{ChenLu14} for a more specific weighted integral system of Wolff type under different and often times stronger assumptions compared to those in this paper. For more on the properties of the Wolff potentials and other related problems, we refer the reader to \cite{KM92,KM94,NguyenVeron14,PhucVerbitsky08}. 

\section{Some preliminaries and the main results}
Throughout this paper we shall further assume that $\gamma \in (1,2]$ and $\sigma_i \in (-\beta\gamma,0]$. We shall also take the coefficients $c_{1}(x)$ and $c_{2}(x)$ of \eqref{Wolff} to be double bounded. In characterizing the fast decaying ground states for the integral systems, we shall consider the integrable solutions. Namely, we say a positive solution $(u,v)$ of system \eqref{Wolff} is an {\bf integrable solution} if $(u,v) \in L^{r_0}(\mathbb{R}^n) \times L^{s_0}(\mathbb{R}^n)$ with $r_0 = \frac{n}{q_0}$ and $s_0 = \frac{n}{p_0}$ where
\begin{align*}
q_0 \doteq {} & \frac{\beta\gamma(\gamma - 1 + q) + (\gamma - 1)\sigma_1 + \sigma_2 q}{pq - (\gamma-1)^2} \,\text{ and } \\ p_0 \doteq {} & \frac{\beta\gamma(\gamma - 1 + p) + (\gamma - 1)\sigma_2 + \sigma_1 p}{pq - (\gamma-1)^2}.
\end{align*}
In view of the Lane--Emden and HLS conjectures and the related non-existence results cited above, we always assume hereafter the non-subcritical condition $$q_0 + p_0 \leq \frac{n-\beta\gamma}{\gamma-1},$$ or equivalently
\begin{equation}\label{non-subcritical} 
\frac{n+\sigma_1}{\gamma -1 +q} + \frac{n+\sigma_2}{\gamma -1 +p} \leq \frac{n-\beta\gamma}{\gamma-1}.
\end{equation}
Then, our main result states that integrable solutions are exactly those ground states which decay with the fast rates. 
\begin{theorem}\label{fast theorem}
Let $q\geq p$ and $\sigma_1 \leq \sigma_2 \leq 0$ and let $u,v$ be a positive solution of the integral system \eqref{Wolff} satisfying \eqref{non-subcritical}. Then $u,v$ are integrable solutions if and only if $u,v$ are bounded and decay with the fast rates as $|x|\longrightarrow \infty$:
$$u(x) \simeq |x|^{-\frac{n-\beta\gamma}{\gamma-1}}$$
and
\begin{equation*}
\left\{\begin{array}{ll}
v(x) \simeq |x|^{-\frac{n-\beta\gamma}{\gamma-1}}, 							                     & \text{ if }\, p(\frac{n-\beta\gamma}{\gamma-1}) - \sigma_2 > n; \\
v(x) \simeq |x|^{-\frac{n-\beta\gamma}{\gamma-1}}(\ln |x|)^{\frac{1}{\gamma-1}},	                 & \text{ if }\, p(\frac{n-\beta\gamma}{\gamma-1}) - \sigma_2 = n; \\
v(x) \simeq |x|^{- \frac{p(\frac{n-\beta\gamma}{\gamma-1}) - (\beta\gamma + \sigma_2)}{\gamma-1}}, & \text{ if }\, p(\frac{n-\beta\gamma}{\gamma-1}) - \sigma_2 < n.
\end{array}
\right.
\end{equation*}
\end{theorem}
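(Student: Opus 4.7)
The theorem is an equivalence and I would handle the two directions separately. The converse direction (fast decay implies $L^{r_0}\times L^{s_0}$) is a direct check: plug each stated decay rate into $\int |x|^{-\lambda}\,dx$, and verify using the non-subcritical condition \eqref{non-subcritical} that the resulting exponent at infinity is strictly greater than $n$, with boundedness handling integrability near the origin (in the critical case, the logarithmic factor is absorbed with room to spare). I would dispatch this direction first and then concentrate the effort on the forward direction.

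For the forward direction, assume $(u,v)\in L^{r_0}\times L^{s_0}$. I would first upgrade to $L^\infty\times L^\infty$ by a bootstrap/regularity-lifting argument: using a Young- or HLS-type integrability estimate for $W_{\beta,\gamma}(|y|^{\sigma_i}\cdot)$ (made available by $|y|^{\sigma_i}\in L^1_{\mathrm{loc}}$ since $\sigma_i>-\beta\gamma>-n$), feed $(u,v)$ through the system to produce a pair with strictly larger Lebesgue exponents, and iterate finitely many times until both are in $L^\infty$. Then I would verify $A_1:=\int_{\mathbb{R}^n}|y|^{\sigma_1}v^q\,dy<\infty$: near $0$ this is immediate from boundedness and $\sigma_1>-n$, while at infinity $v\in L^{s_0}\cap L^\infty$ combined with $\sigma_1\leq 0$ (plus a preliminary weak decay of $v$ read off the integral equation, if $q<s_0$) gives convergence. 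For the sharp asymptotic of $u$, set $R=|x|$ and decompose
\begin{equation*}
W_{\beta,\gamma}(|y|^{\sigma_1}v^q)(x)=\Big(\int_{0}^{R/2}+\int_{R/2}^{2R}+\int_{2R}^{\infty}\Big)\left(\frac{\int_{B_t(x)}|y|^{\sigma_1}v^q\,dy}{t^{n-\beta\gamma}}\right)^{1/(\gamma-1)}\frac{dt}{t}.
\end{equation*}
The outer piece $t\geq 2R$ dominates: since $B_t(x)\supset B_{t/2}(0)$, the inner integral is $\simeq A_1$, and integrating in $t$ gives $A_1^{1/(\gamma-1)}R^{-(n-\beta\gamma)/(\gamma-1)}$ up to constants. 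On the middle and inner pieces, $|y|\simeq R$ on $B_t(x)$ together with $v\in L^\infty$ produces strictly faster decay, and the matching lower bound follows from keeping only the outer piece and the positivity of $A_1$. The double-boundedness of $c_1$ then yields $u(x)\simeq R^{-(n-\beta\gamma)/(\gamma-1)}$.

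The decay of $v$ is obtained by running the identical decomposition on $v(x)=c_2(x)W_{\beta,\gamma}(|y|^{\sigma_2}u^p)(x)$, now with the sharp $u(y)\simeq|y|^{-(n-\beta\gamma)/(\gamma-1)}$ in hand. The truncated weighted mass $\int_{|y|\leq t}|y|^{\sigma_2}u^p\,dy$ is bounded, of order $\log t$, or of order $t^{n-p(n-\beta\gamma)/(\gamma-1)+\sigma_2}$, according to whether $p(n-\beta\gamma)/(\gamma-1)-\sigma_2$ is $>n$, $=n$, or $<n$, and substituting into the Wolff integral produces the three asymptotic regimes in the statement. The main obstacle I anticipate is the critical/logarithmic case, where the dominant contribution is not concentrated in the outer tail but is spread across the scale $t\sim R$, so the contributions from the middle and outer pieces must be matched carefully to extract the factor $(\log R)^{1/(\gamma-1)}$ without losing constants; this is also where the hypotheses $q\geq p$ and $\sigma_1\leq\sigma_2\leq 0$ enter, ensuring $u$ always enjoys the fast rate so the case split need be performed only once, on $v$.
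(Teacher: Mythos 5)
Your converse direction and the general architecture of the forward direction (upgrade to $L^\infty$, check $A_1=\int|y|^{\sigma_1}v^q\,dy<\infty$, decompose the Wolff integral by scales, then bootstrap the sharp rate of $u$ into the equation for $v$) match the paper. However, there is a genuine gap at the single most delicate point: your claim that on the inner piece $t\le R/2$ the bound ``$|y|\simeq R$ together with $v\in L^\infty$ produces strictly faster decay'' is false. For $t\le R/2$ and $y\in B_t(x)$ one only gets $\int_{B_t(x)}|y|^{\sigma_1}v^q\,dy\le C\|v\|_\infty^q R^{\sigma_1}t^n$, and hence
\begin{equation*}
\int_0^{R/2}\Big(\frac{R^{\sigma_1}t^{n}}{t^{n-\beta\gamma}}\Big)^{\frac{1}{\gamma-1}}\frac{dt}{t}\;\simeq\; R^{\frac{\beta\gamma+\sigma_1}{\gamma-1}},
\end{equation*}
which \emph{grows} with $R$ since $\sigma_1>-\beta\gamma$. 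Even replacing $L^\infty$ by $L^{s_0}$ via H\"older only yields the slow rate $R^{-q_0}$, not the fast rate. Boundedness of $v$ is simply not enough information at the scales $t\lesssim|x|$.

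What is missing is a pointwise intermediate decay estimate for $v$, namely $v(y)\le C|y|^{-(n+\sigma_1)/q}$ away from the origin, so that $|y|^{\sigma_1}v(y)^q\le C|y|^{-n}\le C|x|^{-n}$ on $B_t(x)$ and the inner piece becomes $\le CR^{-n/(\gamma-1)}\cdot R^{\beta\gamma/(\gamma-1)}=CR^{-(n-\beta\gamma)/(\gamma-1)}$. This estimate is precisely Theorem \ref{boundedness property} of the paper (the bound $v(x)|x|^{(n+\sigma_1)/q}\varphi_r(x)\le C$ uniformly in $r$), and its proof is the technical core of the argument: a contradiction scheme at maximum points of $u$ and $v$ on dyadic annuli, coupling the two equations through estimates of the form $u(\overline{x}_{r_j})\le \varepsilon v(x_{r_j})^{1+\varepsilon_1}+C|x_{r_j}|^{-(n-\beta\gamma)/(\gamma-1)}$ and its counterpart, with a careful case analysis according to the sign of $p\frac{n-\beta\gamma}{\gamma-1}-\sigma_2-n$. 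Your proposal does not identify the need for this step, and without it the upper bound $u(x)\lesssim|x|^{-(n-\beta\gamma)/(\gamma-1)}$ — on which the entire case analysis for $v$ rests — cannot be obtained. The rest of your outline (the outer-tail lower bound from $A_1>0$, the three regimes for $v$ via the truncated mass $\int_{|y|\le t}|y|^{\sigma_2}u^p\,dy$, including the logarithmic matching in the critical case) is sound once this estimate is supplied.
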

This theorem essentially contains the decay properties of solutions for the weighted HLS type integral system, which can also be found in \cite{Villavert:14d}.

\begin{corollary}\label{cor1}
Let $q\geq p$, $\sigma_1 \leq \sigma_2$ and let $u,v$ be a positive solution of system \eqref{wHLS} satisfying the non-subcritical condition
\begin{equation*}
\frac{n+\sigma_1}{1+q} + \frac{n+\sigma_2}{1+p} \leq n-\alpha.
\end{equation*}
Then $u,v$ are integrable solutions if and only if $u,v$ are bounded and decay with the fast rates as $|x|\longrightarrow \infty$:
$$u(x) \simeq |x|^{-(n-\alpha)}$$
and
\begin{equation*}
\left\{\begin{array}{ll}
v(x) \simeq |x|^{-(n-\alpha)},                           & \text{ if }\, p(n-\alpha) - \sigma_2 > n; \\
v(x) \simeq |x|^{-(n-\alpha)}\ln |x|,                    & \text{ if }\, p(n-\alpha) - \sigma_2 = n; \\
v(x) \simeq |x|^{-(p(n-\alpha) - (\alpha + \sigma_2))},  & \text{ if }\, p(n-\alpha) - \sigma_2 < n.
\end{array}
\right.
\end{equation*}
\end{corollary}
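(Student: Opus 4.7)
\textbf{Proof proposal for Corollary \ref{cor1}.} The plan is to obtain the result as an immediate specialization of Theorem \ref{fast theorem}. The key observation, already recorded in the introduction, is that taking $\beta = \alpha/2$ and $\gamma = 2$ produces
$$W_{\alpha/2,2}(f)(x) = \frac{1}{n-\alpha}\, I_\alpha(f)(x),$$
so the weighted HLS system \eqref{wHLS} coincides with \eqref{Wolff} after absorbing the constant $\frac{1}{n-\alpha}$ into coefficient functions $c_1(x) \equiv c_2(x) \equiv \frac{1}{n-\alpha}$, which are manifestly double bounded. Under these choices, $\gamma \in (1,2]$ and the standing range $\sigma_i \in (-\beta\gamma, 0]$ becomes $\sigma_i \in (-\alpha, 0]$, the natural range for the Hardy--Sobolev weights.

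Next, I would substitute $\beta\gamma = \alpha$ and $\gamma - 1 = 1$ into every exponent appearing in Theorem \ref{fast theorem}. This gives $\frac{n-\beta\gamma}{\gamma-1} = n-\alpha$, $\frac{1}{\gamma-1} = 1$, and
$$\frac{p\left(\frac{n-\beta\gamma}{\gamma-1}\right) - (\beta\gamma + \sigma_2)}{\gamma-1} = p(n-\alpha) - (\alpha + \sigma_2),$$
while the non-subcritical condition \eqref{non-subcritical} collapses to
$$\frac{n+\sigma_1}{1+q} + \frac{n+\sigma_2}{1+p} \leq n-\alpha,$$
which is precisely the hypothesis stated in the corollary. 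Similarly, the integrability exponents $r_0 = n/q_0$ and $s_0 = n/p_0$ reduce to the expected HLS exponents, so that the notion of ``integrable solution'' matches in both formulations.

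With these translations in place, the conclusion of Theorem \ref{fast theorem}---boundedness together with $u(x) \simeq |x|^{-(n-\alpha)}$ and the trichotomy of decay rates for $v$ governed by the sign of $p(n-\alpha) - \sigma_2 - n$---reads off verbatim as the conclusion of Corollary \ref{cor1}. Since the corollary inherits its hypotheses directly from those of Theorem \ref{fast theorem}, there is no genuine obstacle beyond the bookkeeping of substitutions. Accordingly, the proof amounts to verifying the parameter identifications above and invoking Theorem \ref{fast theorem}.
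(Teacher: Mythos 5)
Your proposal is correct and matches the paper's intent exactly: the paper offers no separate proof of Corollary \ref{cor1}, presenting it as an immediate specialization of Theorem \ref{fast theorem} via $\beta=\alpha/2$, $\gamma=2$, under which $W_{\alpha/2,2}=\frac{1}{n-\alpha}I_\alpha$ and all exponents reduce as you compute. (One trivial bookkeeping slip: since $I_\alpha=(n-\alpha)W_{\alpha/2,2}$, the absorbed constant should be $c_i\equiv n-\alpha$ rather than $\frac{1}{n-\alpha}$, but either constant is double bounded so nothing changes.)
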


\begin{remark}
The assumptions $q \geq p$ and $\sigma_1 \leq \sigma_2$ are due to the inhomogeneity of the systems when $q \neq p$ and $\sigma_1 \neq \sigma_2$ and this illustrates a difficulty we encounter, which does not arise in the scalar case, when examining the systems. However, these assumptions are not so essential in the following sense. Indeed, we can interchange these parameters and the results of Theorem \ref{fast theorem} remain valid provided we interchange the parameters along with $u$ and $v$ accordingly in the statement of the theorem.
\end{remark}

\begin{remark}
Consider the unweighted case where $\sigma_i = 0$. 
\begin{enumerate}[(i)]
\item In \cite{MCL11} and \cite{SunLei2012}, the authors considered instead the ``finite-energy" solutions i.e. $(u,v) \in L^{p+\gamma-1}(\mathbb{R}^n)\times L^{q+\gamma -1}(\mathbb{R}^n)$ for the unweighted system \eqref{Wolff} under the critical case 
$$\frac{1}{\gamma -1 + q} + \frac{1}{\gamma -1 + p} = \frac{n-\beta\gamma}{n(\gamma-1)}.$$ 
From Theorem 1 in \cite{MCL11}, we may deduce that finite-energy solutions are integrable solutions. Conversely, in the next section we show that integrable solutions are indeed finite-energy solutions even under the weaker condition \eqref{non-subcritical}.
\item In the critical case, the particular rate for $v(x)$,
$$\frac{p(\frac{n-\beta\gamma}{\gamma-1}) - (\beta\gamma + \sigma_2)}{\gamma-1},$$ 
is equal to 
$\frac{n}{\gamma-1}\frac{\gamma-1+p}{\gamma-1+q}$
and so our main theorem coincides with the asymptotic results of \cite{SunLei2012} for the unweighted system.
\end{enumerate}
If $\sigma_i \neq 0$, system \eqref{wHLS} differs from the well-known doubly weighted HLS system in terms of the asymptotic properties of their solutions. Namely, the fast decay rates of solutions for \eqref{wHLS}, as indicated by Corollary \ref{cor1}, are different from the doubly weighted HLS system (cf. \cite{LLM12}). 
\end{remark}

As a consequence of Theorem \ref{fast theorem}, we can also establish a corresponding result for quasilinear systems. Consider the system
\begin{equation}\label{quasilinear}
  \left\{\begin{array}{c}
    -\dv\,\mathcal{A}(x,\nabla u) = c_{1}(x)|x|^{\sigma_1}v(x)^q,  \medskip \\
    -\dv\,\mathcal{A}(x,\nabla v) = c_{2}(x)|x|^{\sigma_2}u(x)^p,
  \end{array}
\right.
\end{equation}
where $c_{1}(x)$ and $c_{2}(x)$ are double bounded and the map $\mathcal{A}:\mathbb{R}^n \times \mathbb{R}^n \mapsto \mathbb{R}^n$ satisfies the following properties. The mapping $x \mapsto \mathcal{A}(x,\xi)$ is measurable for all $\xi \in \mathbb{R}^n;$ the mapping $\xi \mapsto \mathcal{A}(x,\xi)$ is continuous for a.e. $x \in \mathbb{R}^n$; for some positive constants $k_{1}\leq k_{2}$ there hold for all $\xi\in\mathbb{R}^n$ and a.e. $x \in \mathbb{R}^n$,
\begin{enumerate}[(a)]
\item $\mathcal{A}(x,\xi)\cdot\xi \geq k_{1}|\xi|^{\gamma}$,
\item $|\mathcal{A}(x,\xi)|\leq k_{2}|\xi|^{\gamma-1}$,
\item $(\mathcal{A}(x,\xi) - \mathcal{A}(x,\xi'))\cdot(\xi - \xi') > 0$ whenever $\xi \neq \xi'$,
\item $\mathcal{A}(x,\lambda\xi) = \lambda|\lambda|^{\gamma-2}\mathcal{A}(x,\xi)$ for all $\lambda \neq 0$.
\end{enumerate}

\begin{remark}
In the simple case where $\mathcal{A}(x,\xi) \doteq |\xi|^{\gamma-2}\xi$, $\dv\,\mathcal{A}(x,\nabla u)$ becomes the usual $\gamma$-Laplace operator $\dv(|\nabla u|^{\gamma-2}\nabla u)$. Moreover, positive solutions of \eqref{quasilinear} are to be understood in the usual weak sense i.e. $u,v \in W^{1,\gamma}_{loc}(\mathbb{R}^n) \cap C(\mathbb{R}^n)$ satisfying the system in the distribution sense.
\end{remark}

\begin{corollary}\label{cor2}
Let $(u,v)$ be a positive solution of system \eqref{quasilinear} satisfying the associated non-subcritical condition. Then $u,v$ are integrable solutions if and only if $u,v$ are bounded and decay with the fast rates as $|x|\longrightarrow \infty$:
$$u(x) \simeq |x|^{-\frac{n-\gamma}{\gamma-1}}$$
and
\begin{equation*}
\left\{\begin{array}{ll}
v(x) \simeq |x|^{-\frac{n-\gamma}{\gamma-1}},                                            & \text{ if }\, p(\frac{n-\gamma}{\gamma-1}) - \sigma_2 > n; \\
v(x) \simeq |x|^{-\frac{n-\gamma}{\gamma-1}}(\ln |x|)^{\frac{1}{\gamma-1}},              & \text{ if }\, p(\frac{n-\gamma}{\gamma-1}) - \sigma_2 = n; \\
v(x) \simeq |x|^{- \frac{p(\frac{n-\gamma}{\gamma-1}) - (\gamma + \sigma_2)}{\gamma-1}}, & \text{ if }\, p(\frac{n-\gamma}{\gamma-1}) - \sigma_2 < n.
\end{array}
\right.
\end{equation*}

\end{corollary}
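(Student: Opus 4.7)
The plan is to reduce the quasilinear system \eqref{quasilinear} to a Wolff type integral system of the form \eqref{Wolff} with $\beta = 1$ and then invoke Theorem \ref{fast theorem} directly. The bridge is the pointwise Wolff potential estimate of Kilpel\"ainen--Mal\'y (see \cite{KM92,KM94}) together with its global version due to Phuc--Verbitsky \cite{PhucVerbitsky08}: if $w \in W^{1,\gamma}_{loc}(\mathbb{R}^n) \cap C(\mathbb{R}^n)$ is a nonnegative distributional solution of $-\dv\,\mathcal{A}(x,\nabla w) = f$ in $\mathbb{R}^n$ with $f \geq 0$, and $w$ vanishes at infinity, then there exist constants $C_1,C_2 > 0$, depending only on $n,\gamma, k_1, k_2$, such that
\begin{equation*}
C_{1} W_{1,\gamma}(f)(x) \leq w(x) \leq C_{2} W_{1,\gamma}(f)(x) \quad \text{for all } x \in \mathbb{R}^n.
\end{equation*}
The structural assumptions (a)--(d) on $\mathcal{A}$ are exactly the hypotheses required for this estimate.

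First, I would apply this estimate to both equations of \eqref{quasilinear} with $f_1(x) = c_1(x)|x|^{\sigma_1} v^q$ and $f_2(x) = c_2(x)|x|^{\sigma_2}u^p$. This yields positive bounded functions $\tilde{c}_1(x),\tilde{c}_2(x)$ satisfying $C^{-1} \leq \tilde{c}_i \leq C$ such that
\begin{equation*}
u(x) = \tilde{c}_1(x) W_{1,\gamma}(|y|^{\sigma_1} \tilde{c}_1^q v^q)(x), \qquad v(x) = \tilde{c}_2(x) W_{1,\gamma}(|y|^{\sigma_2} \tilde{c}_2^p u^p)(x),
\end{equation*}
after absorbing the double bounded factors $c_i$ and $\tilde{c}_i^{q},\tilde{c}_i^{p}$ into new double bounded coefficients. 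Thus $(u,v)$ solves a system of the form \eqref{Wolff} with $\beta = 1$, $\beta\gamma = \gamma$, and double bounded coefficients; the associated non-subcritical condition \eqref{non-subcritical} reads
\begin{equation*}
\frac{n+\sigma_1}{\gamma - 1 + q} + \frac{n+\sigma_2}{\gamma - 1 + p} \leq \frac{n-\gamma}{\gamma-1},
\end{equation*}
which is precisely the non-subcritical condition for \eqref{quasilinear}. Moreover, since $u$ and $W_{1,\gamma}(|y|^{\sigma_1}v^q)$ are comparable (and likewise for $v$), $L^{r_0}\times L^{s_0}$ integrability of $(u,v)$ in the quasilinear sense is equivalent to $L^{r_0}\times L^{s_0}$ integrability of the corresponding integral solution with $\beta=1$.

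At this point, Theorem \ref{fast theorem} applies verbatim with $\beta\gamma$ replaced by $\gamma$, and its conclusion gives the boundedness and the three-case fast decay profile stated in Corollary \ref{cor2}. Conversely, if $(u,v)$ is bounded and decays at the claimed fast rate, then evaluating the Wolff potentials coming from the Kilpel\"ainen--Mal\'y representation shows that $u \in L^{r_0}$ and $v \in L^{s_0}$, recovering the integrability.

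The main obstacle I would anticipate is a technical one rather than a conceptual one: verifying that the Kilpel\"ainen--Mal\'y pointwise estimate is applicable to \emph{our} ground states, which requires knowing a priori that $u,v \to 0$ at infinity (needed to solve the obstacle problem in the upper bound). For the forward direction this follows from the integrability hypothesis and the Wolff potential estimate applied to a single equation (yielding continuity and decay at infinity). For the reverse direction the assumption of fast decay already gives this vanishing. Once this technical point is settled, the corollary reduces cleanly to Theorem \ref{fast theorem}.
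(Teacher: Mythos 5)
Your proposal is correct and follows essentially the same route as the paper: both reduce \eqref{quasilinear} to a Wolff system with $\beta=1$ via the Kilpel\"ainen--Mal\'y potential estimate (Corollary 4.13 of \cite{KM94}), absorb the comparison constants into new double bounded coefficients, and invoke Theorem \ref{fast theorem}. The only remark is that your anticipated technical obstacle resolves even more simply than you suggest: the estimate requires only $\inf_{\mathbb{R}^n} u = \inf_{\mathbb{R}^n} v = 0$, which for the forward direction is immediate from $L^{r_0}\times L^{s_0}$ integrability of positive continuous functions, with no need to first extract decay from the Wolff representation.
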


Let us now recall several basic estimates for both the Riesz and Wolff potentials which we often invoke throughout this paper (see \cite{JL06,MCL11}). 
\begin{lemma}\label{wHLS Wolff inequality}
Let $p,q > 1$.
\begin{enumerate}[(1)]
\item (weighted HLS type inequality) Let $\alpha \in (0,n)$, and $\sigma \in (-\alpha, 0]$. Then there exists some positive constant $C = C(n,p,\alpha,\sigma)$ such that
$$ \|I_{\alpha}(|y|^{\sigma}f)\|_{q} \leq C\|f\|_{p} \,\text{ for all }\, f \in L^{p}(\mathbb{R}^n),$$
where $ \frac{1}{p} - \frac{1}{q} = \frac{\alpha + \sigma}{n}$ and $q > \frac{n}{n-\alpha}$.

\item Let $\beta > 0$ , $\gamma > 1$, and $\beta\gamma < n$. Then there exists some positive constant $C$ such that
$$ \|W_{\beta,\gamma}(f)\|_{q} \leq C\|f\|^{\frac{1}{\gamma-1}}_{p} \,\text{ for all }\, f \in L^{p}(\mathbb{R}^n),$$
where $\frac{1}{p} - \frac{\gamma-1}{q} = \frac{\beta\gamma}{n}$ and $q > \gamma -1$.
\end{enumerate}
\end{lemma}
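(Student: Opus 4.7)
Both estimates are weighted boundedness results for classical potentials, so I will treat them separately. Part (1) is the Stein--Weiss weighted HLS inequality, which I would establish by a three-region decomposition; part (2) is the Wolff analogue, which admits a clean Hedberg-type proof using the Hardy--Littlewood maximal function $M$.

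For part (1), I would assume $f \ge 0$ and split $\mathbb{R}^n = A_1 \cup A_2 \cup A_3$ with $A_1 = \{|y| \le |x|/2\}$, $A_2 = \{|x|/2 < |y| < 2|x|\}$, $A_3 = \{|y| \ge 2|x|\}$. On $A_1$, use $|x-y| \ge |x|/2$ and Hölder; the scaling condition $\tfrac{1}{p}-\tfrac{1}{q} = \tfrac{\alpha+\sigma}{n}$ combined with the hypothesis $q > n/(n-\alpha)$ is exactly equivalent to $\sigma > -n/p'$, which is precisely what is needed to ensure that the weight $|y|^{\sigma p'}$ is integrable near the origin. On $A_3$, use $|x-y| \ge |y|/2$ to absorb the singularity into $|y|^{\sigma - (n-\alpha)}$ and apply Hölder at infinity. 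On $A_2$, the relation $|y| \simeq |x|$ lets one replace the weight by $|x|^\sigma$ and reduce to the classical unweighted HLS inequality on an annulus centered at $x$. The final $L^q$ bound is concluded either via weak-type estimates and Marcinkiewicz interpolation, or equivalently by reducing to a one-dimensional Hardy inequality after radial rearrangement in the spirit of the original Stein--Weiss argument.

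For part (2) I would use a Hedberg-type pointwise estimate. For each $x$ and $R > 0$, split $W_{\beta,\gamma}(f)(x) = \int_0^R + \int_R^\infty$. The trivial bound $\int_{B_t(x)} f \le C t^n Mf(x)$ and integration in $t$ give
\[
\int_0^R \Bigl(\tfrac{\int_{B_t(x)} f}{t^{n-\beta\gamma}}\Bigr)^{\frac{1}{\gamma-1}}\tfrac{dt}{t} \le C (Mf(x))^{\frac{1}{\gamma-1}} R^{\frac{\beta\gamma}{\gamma-1}}.
\]
Hölder at level $p$ yields $\int_{B_t(x)} f \le C \|f\|_p t^{n/p'}$, and integrating in $t$ (convergence requires $\beta\gamma < n/p$, which is forced by the scaling identity and $q > 0$) gives
\[
\int_R^\infty \Bigl(\tfrac{\int_{B_t(x)} f}{t^{n-\beta\gamma}}\Bigr)^{\frac{1}{\gamma-1}}\tfrac{dt}{t} \le C \|f\|_p^{\frac{1}{\gamma-1}} R^{\frac{\beta\gamma - n/p}{\gamma-1}}.
\]
Optimizing in $R$ (balance the two terms: $R^{n/p} \sim \|f\|_p / Mf(x)$) produces the pointwise bound
\[
W_{\beta,\gamma}(f)(x) \le C \|f\|_p^{\beta\gamma p/(n(\gamma-1))}\,(Mf(x))^{(1-\beta\gamma p/n)/(\gamma-1)}.
\]
Raising to the $q$-th power, the scaling $\tfrac{1}{p}-\tfrac{\gamma-1}{q} = \tfrac{\beta\gamma}{n}$ says precisely that the exponent of $Mf$ times $q$ equals $p$, so the $L^p$ boundedness of $M$ (valid for $p > 1$) delivers $\|W_{\beta,\gamma}(f)\|_q \le C\|f\|_p^{1/(\gamma-1)}$.

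The most delicate step will be part (1). The individual pointwise estimates on $A_1$ and $A_3$ come out to be of the form $C\|f\|_p |x|^{-n/q}$, which lies exactly on the borderline of $L^q$. Hence the $L^q$ bound cannot be obtained directly from pointwise estimates; this is the well-known endpoint phenomenon of the HLS/Stein--Weiss inequality and is what forces the weak-type / Marcinkiewicz interpolation step or the rearrangement reduction. Part (2), by contrast, is cleaner: the Hedberg scheme produces a strong-type bound directly because the $L^p$ maximal inequality is already strong-type, so no interpolation or rearrangement is needed and the argument reduces to bookkeeping the exponents.
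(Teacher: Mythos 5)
Your proposal is correct in substance, but it is worth noting that the paper does not prove this lemma at all: it is quoted as a known result with citations to \cite{JL06} and \cite{MCL11} (the weighted HLS/Stein--Weiss inequality and the Wolff-potential embedding, respectively), so you are supplying a proof where the paper supplies a reference. Your part (2) is a complete and correct Hedberg-type argument: the two exponent computations check out ($\beta\gamma<n/p$ does follow from the scaling identity since $\gamma>1$ and $q>0$; the balance $R^{n/p}\sim\|f\|_p/Mf(x)$ gives exactly the exponent $(1-\beta\gamma p/n)/(\gamma-1)$ on $Mf$, and $q$ times that exponent equals $p$ precisely by the scaling relation), and the hypothesis $p>1$ is what licenses the strong-type maximal inequality at the end. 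Your part (1) is the standard Stein--Weiss decomposition, and you correctly identify both the equivalence $q>n/(n-\alpha)\Leftrightarrow\sigma>-n/p'$ under the scaling relation and the endpoint phenomenon: the pointwise bounds on $A_1$ and $A_3$ land exactly on $|x|^{-n/q}$, which is only weak-$L^q$, so Marcinkiewicz interpolation along the open family of admissible $(p,q)$ on the line $\tfrac1p-\tfrac1q=\tfrac{\alpha+\sigma}{n}$ (or the rearrangement/Hardy reduction) is genuinely needed. The one place your sketch is slightly loose is the middle region: $A_2=\{|x|/2<|y|<2|x|\}$ is an annulus about the origin, not about $x$, and after replacing $|y|^\sigma$ by $|x|^\sigma$ one is left with $|x|^\sigma I_\alpha(f\chi_{A_2})(x)$ rather than a bare unweighted potential, so this piece also requires either a dyadic decomposition in $|x|$ or its own weak-type estimate before interpolating; this is routine but should be acknowledged as part of the "real work" rather than a free reduction to unweighted HLS.
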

Moreover, we have a comparison principle between the $L^p$ norms of the Riesz and Wolff potentials (see Proposition 5.1 in \cite{PhucVerbitsky08}).  
\begin{lemma}[Wolff's inequality] \label{Wolff's inequality}
Let $p>1$, $\beta>0$, $\gamma > 1$ and $\beta\gamma < n$.
Then there exist positive constants  $C_1$ and $C_2$ such that
$$C_{1}\|W_{\beta,\gamma}(f)\|_{p} \leq \|I_{\beta\gamma}(f)\|_{\frac{p}{\gamma-1}}^{\frac{1}{\gamma-1}} \leq C_{2}\|W_{\beta,\gamma}(f)\|_{p},$$
\end{lemma}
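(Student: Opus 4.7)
The plan is to establish the two-sided equivalence between $\|W_{\beta,\gamma}(f)\|_{p}$ and $\|I_{\beta\gamma}(f)\|_{p/(\gamma-1)}^{1/(\gamma-1)}$ by exploiting the fact that both potentials are built from the same averaging kernel. Setting $N(x,t) = t^{\beta\gamma - n}\int_{B_{t}(x)} f(y)\,dy$, the layer-cake identity (Fubini applied to $|x-y|^{\beta\gamma - n} = (n-\beta\gamma)\int_{|x-y|}^{\infty} t^{\beta\gamma - n - 1}\,dt$) already computed in the introduction gives
\begin{equation*}
I_{\beta\gamma}(f)(x) = c(n,\beta\gamma)\int_{0}^{\infty} N(x,t)\,\frac{dt}{t}, \qquad W_{\beta,\gamma}(f)(x) = \int_{0}^{\infty} N(x,t)^{1/(\gamma-1)}\,\frac{dt}{t}.
\end{equation*}
In the limiting case $\gamma = 2$, both sides reduce to the Riesz norm and the inequality is trivial, so the content lies in the regime $1 < \gamma < 2$, where $1/(\gamma-1) > 1$.

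For the right-hand inequality $\|I_{\beta\gamma}(f)\|_{p/(\gamma-1)}^{1/(\gamma-1)} \leq C_{2}\|W_{\beta,\gamma}(f)\|_{p}$, I would employ a Hedberg-type pointwise splitting: for each $x$, dyadically decompose $\int_{0}^{\infty}(\cdot)\,dt/t \simeq \sum_{k\in\mathbb{Z}} (\cdot)|_{t=2^{k}}$, split the sum at a threshold level $k_{x}$, control the small-scale part by the fractional maximal function $M_{\beta\gamma}f(x)$, control the large-scale part by Hölder on the dyadic sum, and optimize over $k_{x}$. Raising to the power $1/(\gamma-1)$ and integrating in $x$ then yields the bound by $\|W_{\beta,\gamma}(f)\|_{p}$ after invoking the $L^{p}$ boundedness of $M_{\beta\gamma}$ (which is subsumed by the weighted HLS-type inequality recorded in Lemma~\ref{wHLS Wolff inequality}) and the dyadic identification of $W_{\beta,\gamma}(f)$.

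For the left-hand inequality $C_{1}\|W_{\beta,\gamma}(f)\|_{p} \leq \|I_{\beta\gamma}(f)\|_{p/(\gamma-1)}^{1/(\gamma-1)}$, which is the substantive direction, I would follow Wolff's classical good-lambda argument. For each $\lambda > 0$ and sufficiently small $\varepsilon > 0$, the goal is to show
\begin{equation*}
\bigl|\{x : W_{\beta,\gamma}(f)(x) > 2\lambda,\ I_{\beta\gamma}(f)(x)^{1/(\gamma-1)} \leq \varepsilon\lambda\}\bigr| \leq \tfrac{1}{2}\bigl|\{x : W_{\beta,\gamma}(f)(x) > \lambda\}\bigr|
\end{equation*}
via a Whitney-type decomposition of the open super-level set $\{W_{\beta,\gamma}(f) > \lambda\}$, localizing $W_{\beta,\gamma}(f)$ to each Whitney cube and bounding the non-local tail pointwise by $I_{\beta\gamma}(f)(x)^{1/(\gamma-1)}$. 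Multiplying this distributional estimate by $\lambda^{p-1}$ and integrating in $\lambda$ converts it into the desired $L^{p}$ comparison by standard good-lambda machinery.

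The main obstacle is the lower bound, specifically the good-lambda estimate: the outer exponent $1/(\gamma-1) > 1$ makes $W_{\beta,\gamma}$ genuinely nonlinear, so the local-vs-global decomposition of the Wolff potential on each Whitney cube requires a careful quantitative bound showing that the part of $W_{\beta,\gamma}(f)$ from scales larger than the cube's diameter is already controlled by $I_{\beta\gamma}(f)^{1/(\gamma-1)}$ at the cube's center. This is precisely the Havin--Maz'ya--Hedberg--Wolff machinery carried out in \cite{PhucVerbitsky08}, to which I would defer for the technical details.
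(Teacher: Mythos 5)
First, note that the paper does not prove this lemma at all: it is quoted verbatim from Proposition 5.1 of \cite{PhucVerbitsky08}, so there is no internal argument to match your sketch against. Your opening reduction is fine --- writing $N(x,t)=t^{\beta\gamma-n}\int_{B_t(x)}f$, discretizing dyadically, and observing that $I_{\beta\gamma}(f)(x)\simeq\sum_{k}N(x,2^k)$ while $W_{\beta,\gamma}(f)(x)\simeq\sum_{k}N(x,2^k)^{1/(\gamma-1)}$ is exactly the right starting point. The problem is that you have assigned the difficulty to the wrong inequality. In the regime you single out, $1<\gamma\le 2$, the exponent $s=1/(\gamma-1)$ is $\ge 1$, so the elementary bound $\sum_k a_k^{s}\le\bigl(\sum_k a_k\bigr)^{s}$ applied to $a_k=N(x,2^k)$ gives the \emph{pointwise} estimate $W_{\beta,\gamma}(f)(x)\le C\,I_{\beta\gamma}(f)(x)^{1/(\gamma-1)}$; taking $L^p$ norms yields the left-hand inequality with no Whitney decomposition, no good-lambda estimate, and no appeal to Wolff's machinery. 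The elaborate argument you reserve for "the substantive direction" is aimed at the trivial one.

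The genuinely hard direction is the right-hand inequality $\|I_{\beta\gamma}(f)\|_{p/(\gamma-1)}\le C\|W_{\beta,\gamma}(f)\|_p^{\gamma-1}$, and your Hedberg-type splitting does not deliver it. The small-scale sum $\sum_{k<k_x}N(x,2^k)$ is controlled by $2^{k_x\beta\gamma}Mf(x)$ as in Hedberg's lemma, but the large-scale sum $\sum_{k\ge k_x}N(x,2^k)$ cannot be compared to $\bigl(\sum_k N(x,2^k)^{1/(\gamma-1)}\bigr)^{\gamma-1}$ by H\"older without a counting factor $\bigl(\sum_{k\ge k_x}1\bigr)^{2-\gamma}=\infty$, and inserting geometric convergence factors destroys the Wolff structure; optimizing over $k_x$ then produces a bound involving $\|f\|_p$, not $\|W_{\beta,\gamma}(f)\|_p$. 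The standard (and Phuc--Verbitsky's) route for this direction is: (i) the Muckenhoupt--Wheeden good-lambda theorem giving $\|I_{\beta\gamma}(f)\|_q\simeq\|M_{\beta\gamma}(f)\|_q$ for all $0<q<\infty$ and $f\ge 0$, where $M_{\beta\gamma}$ is the fractional maximal operator, together with (ii) the pointwise bound $\bigl(M_{\beta\gamma}f(x)\bigr)^{1/(\gamma-1)}\le C\,W_{\beta,\gamma}(f)(x)$, which holds because a single dyadic term $N(x,2^k)^{1/(\gamma-1)}$ is dominated by the full sum defining $W_{\beta,\gamma}(f)(x)$. In short, the good-lambda machinery belongs on the right-hand inequality, and the left-hand inequality is a one-line consequence of $\ell^1\hookrightarrow\ell^{s}$; as written, your proposal defers to \cite{PhucVerbitsky08} for the half that needs no deferral while leaving the substantive half resting on an argument that does not close.
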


The remaining parts of this paper are organized in the following way. In Section \ref{section properties}, we establish several important qualitative properties of integrable solutions which are essential in our proof of Theorem \ref{fast theorem}, including an optimal integrability result. In the same section, a boundedness property is given in Theorem \ref{boundedness property} which is another key ingredient in establishing the fast decay rates of integrable solutions. However, we delay its proof until Section \ref{proof of theorem 3} in order to better illustrate the main ideas in the proof of Theorem \ref{fast theorem}. Then, Section \ref{section fast decay} and Section \ref{proof of corollary} contains the proof of Theorem \ref{fast theorem} and Corollary \ref{cor2}, respectively. Moreover, we should mention that some of our methods below are inspired by those from \cite{LL12} and \cite{SunLei2012}.

\section{Properties of integrable solutions}\label{section properties}
First, we establish an optimal integrability result for integrable solutions and show that they are indeed ground states.

\begin{theorem}\label{integrability theorem}
Suppose $q\geq p$ and $\sigma_1 \leq \sigma_2$. If $u,v$ are positive integrable solutions of \eqref{Wolff}, then $(u,v) \in L^{r}(\mathbb{R}^n) \times L^{s}(\mathbb{R}^n)$ where
\begin{equation}\label{interval}
\frac{n(\gamma-1)}{n-\beta\gamma} < r \leq \infty \,\text{ and }\, \max \Big\lbrace \frac{n(\gamma-1)}{n-\beta\gamma}, \frac{n(\gamma-1)} {p(\frac{n-\beta\gamma}{\gamma-1}) - (\beta\gamma +\sigma_2) }\Big\rbrace < s \leq \infty. 
\end{equation}
Furthermore, $u,v \longrightarrow 0$ as $|x| \longrightarrow \infty$.
\end{theorem}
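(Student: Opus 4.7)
The plan is a regularity lifting argument driven by the Wolff–HLS estimates of Lemmas \ref{wHLS Wolff inequality} and \ref{Wolff's inequality}. Combining these two lemmas applied to $u = c_1 W_{\beta,\gamma}(|y|^{\sigma_1}v^q)$ and $v = c_2 W_{\beta,\gamma}(|y|^{\sigma_2}u^p)$ gives the coupled norm bounds
\begin{equation*}
\|u\|_r^{\gamma-1} \leq C\|v\|_{qt_1}^q, \qquad \|v\|_s^{\gamma-1} \leq C\|u\|_{pt_2}^p,
\end{equation*}
where the Sobolev-type exponents satisfy $\frac{1}{t_1} = \frac{\gamma-1}{r} + \frac{\beta\gamma+\sigma_1}{n}$, $\frac{1}{t_2} = \frac{\gamma-1}{s} + \frac{\beta\gamma+\sigma_2}{n}$, subject to the admissibility conditions $r/(\gamma-1), s/(\gamma-1) > n/(n-\beta\gamma)$ coming from the Wolff inequality and the weighted HLS inequality. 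A direct substitution verifies that the base pair $(r_0,s_0)$ of the integrable solution hypothesis is the self-consistent fixed point of this coupled pair of relations.

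The lifting step is executed via a truncation/contraction scheme in the spirit of Chen--Li type regularity lifting. For a large parameter $A>0$, decompose $v = v_A + v^A$ with $v^A = v\chi_{\{v>A\}}$, and analogously for $u$. Since $(u,v) \in L^{r_0}\times L^{s_0}$, the high parts $u^A, v^A$ have arbitrarily small norms in the critical spaces as $A\to\infty$, while the low parts are in $L^\infty \cap L^{r_0}$. Substituting this decomposition into the system and applying the coupled estimates above, the high-part contribution is small enough to be absorbed into the left-hand side as a strict contraction once $A$ is chosen large, while the low-part contribution is bounded in any target $L^r$ by interpolation and H\"older. Iterating the bootstrap along the two coupled Sobolev relations exhausts the full interval \eqref{interval}, including the endpoint $r=s=\infty$.

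Finally, for the decay at infinity, once $(u,v)$ is known to be bounded and to lie in $L^r\times L^s$ on a nontrivial range, the densities $|y|^{\sigma_i}v^q$ and $|y|^{\sigma_i}u^p$ are globally integrable: the condition $\sigma_i > -\beta\gamma$ handles the origin, while the weight $|y|^{\sigma_i}$ with $\sigma_i \leq 0$ combined with the improved integrability of $u,v$ controls the decay at infinity. Splitting the defining Wolff integral at a suitable scale depending on $|x|$ and applying dominated convergence on each piece using $\int_{B_t(x)}f(y)\,dy \to 0$ as $|x|\to\infty$ for fixed $t$ when $f\in L^1$, one concludes $u(x),v(x)\to 0$ as $|x|\to\infty$.

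The main obstacle is the bootstrapping step: one must verify at each iteration that the admissibility conditions $q>\gamma-1$ and $r/(\gamma-1) > n/(n-\beta\gamma)$ hold, and that no iterate falls out of the admissible range before \eqref{interval} is exhausted. The ordering assumptions $q\geq p$ and $\sigma_1\leq \sigma_2$ are used precisely to keep the two coupled iterations synchronized so that the component with weaker decay does not drop below the critical line, while the non-subcritical hypothesis \eqref{non-subcritical} governs the endpoint behaviour in \eqref{interval}.
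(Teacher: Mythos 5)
Your proposal identifies the right general framework (Chen--Li type regularity lifting via a truncation/contraction scheme, followed by a bootstrap and a decay argument), which is indeed what the paper does. However, there are two genuine gaps that your outline would have to fill before it could be called a proof.

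\textbf{The linearization of the Wolff potential.} The Wolff potential involves the outer exponent $\frac{1}{\gamma-1}$, so the map $g\mapsto W_{\beta,\gamma}(|y|^{\sigma_1}v_A^{q-1}g)$ is \emph{not} linear, and your phrase ``absorbed into the left-hand side as a strict contraction'' glosses over this. The coupled norm bounds you state (which are correct, and are exactly Lemmas~\ref{wHLS Wolff inequality} and~\ref{Wolff's inequality}) are nonlinear in $g$, so the high-part smallness does not by itself yield a Banach fixed-point setup. The paper handles this by the factorization $\frac{1}{\gamma-1}=\frac{2-\gamma}{\gamma-1}+1$, writing
\[
T_1 g(x) = \int_0^\infty \Big(\tfrac{\int_{B_t(x)}|y|^{\sigma_1}v^q\,dy}{t^{n-\beta\gamma}}\Big)^{\frac{2-\gamma}{\gamma-1}}\Big(\tfrac{\int_{B_t(x)}|y|^{\sigma_1}v_A^{q-1}g\,dy}{t^{n-\beta\gamma}}\Big)\frac{dt}{t},
\]
which \emph{is} linear in $g$. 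One then bounds the first factor by $u(x)^{2-\gamma}$ via H\"older in $t$, and applies the Wolff/HLS estimates to the remaining Riesz-like piece. This step is where the standing hypothesis $\gamma\in(1,2]$ is actually used, since it is what makes $\frac{2-\gamma}{\gamma-1}\ge 0$. Your proposal never exploits this restriction and never confronts the nonlinearity, which is the central technical obstacle. Relatedly, the paper's truncation $v_A$ is supported on $\{v>A\}\cup\{|x|>A\}$, not merely $\{v>A\}$; the extra region is what makes the remainder $(F,G)$ compactly supported, hence automatically in $L^r\times L^s$.

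\textbf{The $L^\infty$ endpoint.} Your bootstrap ``exhausts the full interval \eqref{interval}, including the endpoint $r=s=\infty$'' cannot work as written, because the weighted HLS inequality of Lemma~\ref{wHLS Wolff inequality} requires the target exponent $q$ to be finite and $>\frac{n}{n-\alpha}$; at the endpoint $r=\infty$ the Riesz potential of an $L^{n/\alpha}$ function is only BMO, not bounded. The paper establishes boundedness by an entirely separate local-averaging argument (Step 3): split $W_{\beta,\gamma}$ at a small scale $\delta$, bound the inner piece using H\"older and the already-established finite-exponent integrability of $v$, bound the outer piece pointwise by $Cu(z)$ for $z$ near $x$, and integrate over a small ball. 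The same mechanism then gives $u(x)\to 0$ via the vanishing of the local $L^{r_0}$ average as $|x|\to\infty$. Your dominated-convergence sketch is plausible only once $u,v\in L^\infty$ is in hand, so it cannot substitute for this separate argument; it is circular as currently ordered.

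The remaining pieces of your outline --- the coupled Sobolev relations, the self-consistency of $(r_0,s_0)$, the role of $q\ge p$ and $\sigma_1\le\sigma_2$ in keeping the bootstrap synchronized, and the use of the non-subcritical condition at the endpoints --- are all consistent with what the paper does in Steps 1 and 2.
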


\begin{remark}
The intervals in \eqref{interval} are indeed optimal. Namely, there hold $\|u\|_{r} = \|v\|_{s} = \infty$ at the endpoints 
$$ r = \frac{n(\gamma-1)}{n-\beta\gamma} \,\text{ and }\, s = \max \Big\lbrace \frac{n(\gamma-1)}{n-\beta\gamma}, \frac{n(\gamma-1)} {p(\frac{n-\beta\gamma}{\gamma-1}) - (\beta\gamma +\sigma_1) }\Big\rbrace.$$
To see this, notice that
$$u(x) \geq c\int_{|x|}^{2|x|} \Big( \frac{\int_{B_{t}(x)} |y|^{\sigma_1} v(y)^q \,dy}{t^{n-\beta\gamma}} \Big)^{\frac{1}{\gamma-1}} \,\frac{dt}{t} 
\geq  c\int_{|x|}^{2|x|} t^{-\frac{n-\beta\gamma}{\gamma-1} } \, \frac{dt}{t} \geq c|x|^{-\frac{n-\beta\gamma}{\gamma-1}}$$
and similarly 
$$v(x) \geq c|x|^{-\frac{n-\beta\gamma}{\gamma-1}}.$$
Therefore, the first estimate implies that for $u$ to belong to $L^{r}(\mathbb{R}^n)$, then $(n-\beta\gamma)(\gamma-1)^{-1}r > n$ or $r > \frac{n(\gamma-1)}{n-\beta\gamma}$. The lower bound of $v(x)$ implies that
\begin{align*}
u(x) \geq {} & c\int_{0}^{|x|} \Big( \frac{\int_{B_{t}(x)} |y|^{\sigma_1} v(y)^q \,dy}{t^{n-\beta\gamma}} \Big)^{\frac{1}{\gamma-1}}  \,\frac{dt}{t} \\
\geq {} & c\int_{0}^{|x|} \Big( \frac{\int_{B_{t}(x)} |y|^{-q\frac{n-\beta\gamma}{\gamma-1} + \sigma_1} \,dy}{t^{n-\beta\gamma}} \Big)^{\frac{1}{\gamma-1}} \,\frac{dt}{t} \\
\geq {} & c|x|^{-\frac{q(n-\beta\gamma)}{(\gamma-1)^2} + \frac{\beta\gamma + \sigma_1}{\gamma -1}}.
\end{align*}
Then, if $u$ belongs to $L^{r}(\mathbb{R}^n)$, then it necessarily holds that 
$$ r > \frac{n(\gamma-1)}{q(\frac{n-\beta\gamma}{\gamma-1}) - (\beta\gamma + \sigma_1)}. $$
Thus, in view of $q \geq p$ and $\sigma_1 \leq \sigma_2$, the necessary condition for $u$ to belong to $L^{r}(\mathbb{R}^n)$ is
$$ r > \max \Big\lbrace \frac{n(\gamma-1)}{n-\beta\gamma}, \frac{n(\gamma-1)}{q(\frac{n-\beta\gamma}{\gamma-1}) - (\beta\gamma + \sigma_1)}    \Big\rbrace = \frac{n(\gamma-1)}{n-\beta\gamma}.$$
Likewise, we can show the necessary condition for $v$ to belong to $L^{s}(\mathbb{R}^n)$ is
$$ s > \max \Big\lbrace \frac{n(\gamma-1)}{n-\beta\gamma}, \frac{n(\gamma-1)}{p(\frac{n-\beta\gamma}{\gamma-1}) - (\beta\gamma + \sigma_2)} \Big\rbrace.$$
\end{remark}

\begin{proof}[Proof of Theorem \ref{integrability theorem}]
Due to the double bounded property, we may assume without loss of generality, that $c_{1}(x), c_{2}(x) \equiv 1$. Set $a = 1/r_0$, $b = 1/s_0$ and let 
$$I \doteq (a-b, \frac{n-\beta\gamma}{n(\gamma-1)}) \times (0,\frac{n-\beta\gamma}{n(\gamma-1)} - a + b). $$ 
Note that $a - b \geq 0$ since $q \geq p$ and $\sigma_1 \leq \sigma_2 \leq 0$.
\medskip

\noindent{\it Step 1:} We first establish the integrability of solutions in the smaller interval $I$, then we extend to the larger interval as stated in the theorem. Choose any pair of positive real numbers $r$ and $s$ such that $(1/r,1/s) \in I$ and
\begin{equation}\label{rs constraint}
\frac{1}{r} - \frac{1}{s} = \frac{1}{r_0} - \frac{1}{s_0} = a - b.
\end{equation}
It follows that $$ \frac{1}{r} - \frac{2-\gamma}{r_0} + \frac{\beta\gamma + \sigma_1}{n} = \frac{q-1}{s_0} + \frac{1}{s} \,\text{ and }\, \frac{1}{s} - \frac{2-\gamma}{s_0} + \frac{\beta\gamma + \sigma_2}{n} = \frac{p-1}{r_0} + \frac{1}{r}.$$
For a fixed real number $A>0$ and some given function $w(x)$, we associate to it the function $w_{A}(x)$ defined
\begin{equation*}
w_{A}(x) =  
\left\{\begin{array}{cc}
    w(x) & \mbox{ if } w(x) > A \mbox{ or } |x| > A, \\
    0    & \mbox{ otherwise. }
  \end{array}
\right.
\end{equation*}
Define the integral operator $T(f,g) = (T_{1}g,T_{2}f)$ where
$$ T_{1}g(x) = \int_{0}^{\infty} \Big( \frac{\int_{B_{t}(x)} |y|^{\sigma_1}v(y)^q \,dy}{t^{n-\beta\gamma}} \Big)^{\frac{2-\gamma}{\gamma-1}} \Big( \frac{\int_{B_{t}(x)} |y|^{\sigma_1}v_{A}(y)^{q-1}g(y) \,dy}{t^{n-\beta\gamma}} \Big) \,\frac{dt}{t},$$
and
$$ T_{2}f(x) = \int_{0}^{\infty} \Big( \frac{\int_{B_{t}(x)} |y|^{\sigma_2}u(y)^p \,dy}{t^{n-\beta\gamma}} \Big)^{\frac{2-\gamma}{\gamma-1}} \Big( \frac{\int_{B_{t}(x)} |y|^{\sigma_2}u_{A}(y)^{p-1}f(y) \,dy}{t^{n-\beta\gamma}} \Big) \,\frac{dt}{t}.$$
Moreover, let 
$$ F = \int_{0}^{\infty} \Big( \frac{\int_{B_{t}(x)} |y|^{\sigma_1}v(y)^q \,dy}{t^{n-\beta\gamma}} \Big)^{\frac{2-\gamma}{\gamma-1}} \Big( \frac{\int_{B_{t}(x)} |y|^{\sigma_1}(v(y)-v_{A}(y))^{q} \,dy}{t^{n-\beta\gamma}} \Big) \,\frac{dt}{t},$$
and
$$ G = \int_{0}^{\infty} \Big( \frac{\int_{B_{t}(x)} |y|^{\sigma_2}u(y)^p \,dy}{t^{n-\beta\gamma}} \Big)^{\frac{2-\gamma}{\gamma-1}} \Big( \frac{\int_{B_{t}(x)} |y|^{\sigma_2} (u(y) - u_{A}(y))^{p} \,dy}{t^{n-\beta\gamma}} \Big) \,\frac{dt}{t}.$$
Clearly, a positive solution $u,v$ of system \eqref{Wolff} satisfies 
\begin{equation}\label{lift eq}
(u,v) = T(u,v) + (F,G).
\end{equation}
By H\"{o}lder's inequality,
\begin{align*}
|T_{1}g(x)| \leq {} & u(x)^{2-\gamma}\Bigg\lbrace \int_{0}^{\infty} \Big( \frac{\int_{B_{t}(x)} |y|^{\sigma_1}v_{A}(y)^{q-1}g(y)\,dy}{t^{n-\beta\gamma}} \Big)^{\frac{1}{\gamma-1}} \,\frac{dt}{t} \Bigg\rbrace^{\gamma-1} \\
\doteq {} & u(x)^{2-\gamma}\Big\lbrace T_{1}^{0}g(x)\Big \rbrace^{\gamma-1}.
\end{align*}
Therefore,
\begin{equation*}
\|T_{1}g\|_{r} \leq \|u\|_{r_0}^{2-\gamma}\|T_{1}^{0}g\|_{\overline{r}}^{\gamma-1},
\end{equation*}
where $\frac{1}{r} = \frac{2-\gamma}{r_0} + \frac{\gamma-1}{\overline{r}}$. Then, by applying the Wolff's inequality of Lemma \ref{Wolff's inequality} followed by the weighted HLS inequality, we get
\begin{align*}
\|T_{1}^{0}g\|_{\overline{r}}^{\gamma-1} \leq {} & \| W_{\beta,\gamma}(|y|^{\sigma_1}v_{A}^{q-1}g)\|_{\overline{r}}^{\gamma-1} \leq C\| I_{\beta\gamma}(|y|^{\sigma_1}v_{A}^{q-1}g)\|_{\frac{\overline{r}}{\gamma -1}} \\
\leq {} & C\|v_{A}^{q-1}g\|_{\frac{n\overline{r}}{n(\gamma-1)+ \overline{r}(\beta\gamma + \sigma_1)}}.
\end{align*}
Noticing that 
$$ \frac{n(\gamma-1)+ \overline{r}(\beta\gamma +\sigma_1)}{n\overline{r}} = \frac{\gamma-1}{\overline{r}}  + \frac{\beta\gamma + \sigma_1}{n} = \frac{1}{r} - \frac{2-\gamma}{r_0} + \frac{\beta\gamma + \sigma_1}{n}, $$
H\"{o}lder's inequality implies
\begin{equation*}
\|v_{A}^{q-1}g\|_{\frac{n\overline{r}}{n(\gamma-1)+ \overline{r}(\beta\gamma + \sigma_1)}} \leq \|v_{A}\|_{s_0}^{q-1}\|g\|_{s},
\end{equation*}
and therefore,
\begin{equation}\label{contract1}
\|T_{1}g\|_{r} \leq C_{1}\|u\|_{r_0}^{2-\gamma} \|v_{A}\|_{s_0}^{q-1}\|g\|_{s}.
\end{equation}
Likewise, there holds
\begin{equation*}
\|T_{2}f\|_{s} \leq \|v\|_{s_0}^{2-\gamma}\|T_{2}^{0}f\|_{\overline{s}}^{\gamma-1},
\end{equation*}
where $\frac{1}{s} = \frac{2-\gamma}{s_0} + \frac{\gamma-1}{\overline{s}}$ and
\begin{equation*}
T_{2}^{0}f(x) \doteq \int_{0}^{\infty} \Big( \frac{\int_{B_{t}(x)} |y|^{\sigma_2}u_{A}(y)^{p-1}f(y)\,dy}{t^{n-\beta\gamma}} \Big)^{\frac{1}{\gamma-1}} \,\frac{dt}{t}.
\end{equation*}
As before, by applying the Wolff type inequality, the weighted HLS inequality and H\"{o}lder's inequality, we arrive at the estimate
\begin{equation}\label{contract2}
\|T_{2}f\|_{s} \leq C_{2}\|v\|_{s_0}^{2-\gamma} \|u_{A}\|_{r_0}^{p-1}\|f\|_{r}.
\end{equation}
Obviously, we can choose $A$ sufficiently large so that 
$$C_{1}\|u\|_{r_0}^{2-\gamma}\|v_{A}\|_{s_0}^{q-1},\, C_{2}\|v\|_{s_0}^{2-\gamma}\|u_{A}\|_{r_0}^{p-1} \leq 1/2.$$
Hence, \eqref{contract1} and \eqref{contract2} imply that the operator $T(f,g)$ equipped with the norm
$$ \|(f_1,f_2)\|_{L^{r}(\mathbb{R}^n)\times L^{s}(\mathbb{R}^n)} \doteq \|f_1\|_{r} + \|f_2\|_{s},$$
is a contraction map from $L^{r}(\mathbb{R}^n)\times L^{s}(\mathbb{R}^n)$ to itself. Moreover, it is clear from the definition that $(F,G)$ belongs to $L^{r}(\mathbb{R}^n) \times L^{s}(\mathbb{R}^n)$. Thus, since $(u,v)$ satisfies \eqref{lift eq}, applying the regularity lifting result of Lemma 2.2 in \cite{MCL11} implies that $(u,v) \in L^{r}(\mathbb{R}^n)\times L^{s}(\mathbb{R}^n)$ for all $(1/r,1/s) \in I$.
\medskip

\noindent{\it Step 2:} We extend the interval $I$. From the first integral equation and Lemmas \ref{wHLS Wolff inequality} and \ref{Wolff's inequality}, we have
\begin{equation*}
\|u\|_{r} \leq C\|W_{\beta,\gamma}(|y|^{\sigma_1}v^q)\|_{r} \leq C\|v^q\|_{\frac{nr}{n(\gamma-1)+r(\beta\gamma+\sigma_1)}}^{\frac{1}{\gamma-1}} \leq C\|v\|_{\frac{nrq}{n(\gamma-1)+r(\beta\gamma+\sigma_1)}}^{\frac{q}{\gamma-1}}.
\end{equation*}
Since $v \in L^{s}(\mathbb{R}^n)$ for all $\frac{1}{s} \in (0,\frac{n-\beta\gamma}{n(\gamma-1)} - a + b)$, the previous estimate implies that $u\in L^{r}(\mathbb{R}^n)$ for all $\frac{1}{r} \in (0, \frac{q}{\gamma-1}\lbrace \frac{n-\beta\gamma}{n(\gamma-1)} - a + b \rbrace - \frac{\beta\gamma+\sigma_1}{n(\gamma-1)})$. From the fact that $p_0, q_0 < \frac{n-\beta\gamma}{n(\gamma-1)}$, we can easily show that
\begin{equation}\label{claim positive}
\frac{q}{\gamma-1}\Big\lbrace \frac{n-\beta\gamma}{n(\gamma-1)} - a + b \Big\rbrace - \frac{\beta\gamma+\sigma_1}{n(\gamma-1)} > a - b,
\end{equation}
and thus $u \in L^{r}(\mathbb{R}^n)$ for all $\frac{1}{r} \in (0,\frac{n-\beta\gamma}{n(\gamma-1)}).$ 

Likewise, we can apply the same arguments on the second integral equation to show that
\begin{equation*}
\|v\|_{s} \leq C\|u\|_{\frac{nsp}{n(\gamma-1)+s(\beta\gamma+\sigma_2)}}^{\frac{p}{\gamma-1}}.
\end{equation*}
Hence, since $u \in L^{r}(\mathbb{R}^n)$ for all $\frac{1}{r} \in (0,\frac{n-\beta\gamma}{n(\gamma-1)})$, we get that $v \in L^{s}(\mathbb{R}^n)$ for all 
$$ \frac{1}{s} \in \Big(0,\min \Big\lbrace \frac{n-\beta\gamma}{n(\gamma-1)},\, \frac{p(\frac{n-\beta\gamma}{\gamma-1}) - (\beta\gamma + \sigma_2)}{n(\gamma-1)} \Big\rbrace\Big).$$ 
\medskip 

\noindent{\it Step 3:} It remains to show that $u,v$ are ground states i.e. $u,v \in L^{\infty}(\mathbb{R}^n)$ and $u,v \longrightarrow 0$ as $|x|\longrightarrow \infty$.

We only show $u(x)$ is bounded and vanish at infinity, since the result for $v(x)$ follows similarly. For small $\delta \in (0,1)$, 
\begin{equation*}
u(x) \leq C\Big( \int_{0}^{\delta} + \int_{\delta}^{\infty}\Big) \Big( \frac{\int_{B_{t}(x)} |y|^{\sigma_1}v(y)^q \,dy}{t^{n-\beta\gamma}} \Big)^{\frac{1}{\gamma-1}} \,\frac{dt}{t} \doteq C(I_1 + I_2).
\end{equation*}
We shall estimate $I_1$ and $I_2$. First, we choose a suitably large $\ell>1$ with $n+ \frac{\sigma_1 \ell}{\ell-1} > 0$ so that  H\"{o}lder's inequality and Theorem \ref{integrability theorem} imply
\begin{align}\label{estimate 0}
\int_{B_{t}(x)} |y|^{\sigma_1} v(y)^q \,dy \leq {} & C\|v^q\|_{\ell} \Big( \int_{B_{t}(x)} |y|^{\frac{\sigma_1 \ell}{\ell-1}}\,dy \Big)^{1-1/\ell} \\
\leq {} & C t^{n(1-1/\ell) + \sigma_1}\|v\|_{\ell q}^{q}. \notag 
\end{align}
In the last inequality, we used estimates \eqref{estimate 1} and \eqref{estimate 2} from below. Let $t \leq |x|/2$. If $y \in B_{t}(x)$, then $|x|/2 \leq |y|$ and thus $|x-y| \leq |y|$. Therefore,
\begin{equation}\label{estimate 1}
\int_{B_{t}(x)} |y|^{\frac{\sigma_1 \ell}{\ell-1}} \,dy \leq \int_{B_{t}(x)} |x-y|^{\frac{\sigma_1 \ell}{\ell-1}}\,dy \leq Ct^{n+\frac{\sigma_1 \ell}{\ell-1}}. 
\end{equation}
On the other hand, let $t > |x|/2$. If $y \in B_{t}(x)$, then $y \in B_{t+|x|}(0)$ and 
\begin{equation}\label{estimate 2}
\int_{B_{t}(x)} |y|^{\frac{\sigma_1 \ell}{\ell-1}} \,dy \leq \int_{B_{t+|x|}(0)} |y|^{\frac{\sigma_1 \ell}{\ell-1}} \,dy \leq \int_{0}^{|x|+t} s^{n+ \frac{\sigma_1 \ell}{\ell - 1}} \,\frac{ds}{s} \leq Ct^{n+\frac{\sigma_1 \ell}{\ell-1}}.
\end{equation}
Choosing $\ell$ large enough so that we also have $\beta\gamma + \sigma_1 - n/\ell > 0$, estimate \eqref{estimate 0} implies
$$ I_{1} \leq C_{1} \int_{0}^{\delta} \Big(\frac{t^{n(1-1/\ell) + \sigma_1}}{t^{n-\beta\gamma}} \Big)^{\frac{1}{\gamma-1}} \,\frac{dt}{t} \leq C_{1}\int_{0}^{\delta} t^{\frac{\beta\gamma + \sigma_1 - n/\ell}{\gamma-1}} \,\frac{dt}{t} < \infty. $$

Choose a small $c \in (0,1)$. If $z \in B_{c}(x)$, then $B_{t}(x) \subset B_{t+c}(z)$. Thus, for $z \in B_{c}(x)$,
\begin{align*}
I_{2} \leq {} & C_{2}\int_{\delta}^{\infty} \Big( \frac{\int_{B_{t}(x)} |y|^{\sigma_1}v(y)^q \,dy}{t^{n-\beta\gamma}} \Big)^{\frac{1}{\gamma-1}} \,\frac{dt}{t} \\
\leq {} & C_{2}\int_{\delta}^{\infty} \Big( \frac{\int_{B_{t+c}(z)} |y|^{\sigma_1}v(y)^q \,dy}{(t+c)^{n-\beta\gamma}} \Big)^{\frac{1}{\gamma-1}} \Big(\frac{t+c}{t}\Big)^{\frac{n-\beta\gamma}{\gamma-1} + 1} \,\frac{d(t+c)}{t+c} \\
\leq {} & C_{2}\Big(1 + \frac{c}{t}\Big)^{\frac{n-\beta\gamma}{\gamma-1} + 1}\int_{\delta + c}^{\infty} \Big( \frac{\int_{B_{t}(z)} |y|^{\sigma_1}v(y)^q \,dy}{t^{n-\beta\gamma}} \Big)^{\frac{1}{\gamma-1}} \,\frac{dt}{t} \\
\leq {} & C_{2}u(z).
\end{align*}
Hence, combining our estimates for $I_1$ and $I_2$ give us $u(x) \leq C_{1} + C_{2}u(z)$ for all $z \in B_{c}(x)$. Integrating this inequality on the ball $B_{c}(x)$ then applying H\"{o}lder's inequality yields
$$u(x) \leq C_{1} + \frac{C_{2}}{|B_{c}(x)|}\int_{B_{c}(x)} u(z) \,dz \leq C_{1} + C_{2}|B_{c}(x)|^{-1/r_0}\|u\|_{r_0} \leq C.$$
Hence, $u \in L^{\infty}(\mathbb{R}^n)$.

For each $\epsilon > 0$, we can find a small $\delta > 0$ such that 
$$ \int_{0}^{\delta} \Big( \frac{\int_{B_{t}(x)} |y|^{\sigma_1}v(y)^q \,dy}{t^{n-\beta\gamma}} \Big)^{\frac{1}{\gamma-1}} \,\frac{dt}{t} \leq C\|v\|_{\infty}^{\frac{q}{\gamma-1}} \int_{0}^{\delta} t^{\frac{\beta\gamma+\sigma_1}{\gamma-1}}\,\frac{dt}{t} < \epsilon. $$
Using similar arguments we used in estimating $I_2$, we calculate
$$ \int_{\delta}^{\infty} \Big( \frac{\int_{B_{t}(x)} |y|^{\sigma_1}v(y)^q \,dy}{t^{n-\beta\gamma}} \Big)^{\frac{1}{\gamma-1}} \,\frac{dt}{t} \leq Cu(z) \,\text{ for all }\, z \in B_{\delta}(x). $$
Hence, $u(x) \leq \epsilon + Cu(z)$ for $z \in B_{\delta}(x)$, which implies $$u(x)^{r_0} \leq C_{1}\epsilon^{r_0} + C_{2}u(z)^{r_0}.$$ Integrating this inequality over the ball $B_{\delta}(x)$ implies
\begin{equation}\label{estimate 3}
u(x)^{r_0} \leq C_{1}\epsilon^{r_0} + \frac{C_{2}}{|B_{\delta}(x)|}\int_{B_{\delta}(x)} u(z)^{r_0} \,dz. 
\end{equation} 
Since $u \in L^{r_0}(\mathbb{R}^n)$, 
$$\frac{1}{|B_{\delta}(x)|}\int_{B_{\delta}(x)} u(z)^{r_0} \,dz \longrightarrow 0 \,\text{ as }\, |x| \longrightarrow \infty,$$ 
and thus the right-hand side of \eqref{estimate 3} tends to zero as $|x| \longrightarrow \infty$ and $\epsilon \longrightarrow 0$. Hence, $u(x) \longrightarrow 0$ as $|x|\longrightarrow \infty$. This completes the proof of the theorem.
\end{proof}

\begin{corollary}\label{corollary}
There holds $$\int_{\mathbb{R}^n} |y|^{\sigma_1}v(y)^q \,dy < \infty.$$
\end{corollary}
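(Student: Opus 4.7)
My plan is to split the integral at the unit sphere $|y|=1$ and treat the two regions separately. On $B_1$, boundedness of $v$ from Theorem \ref{integrability theorem} gives
\[\int_{B_1}|y|^{\sigma_1}v(y)^q\,dy\le \|v\|_\infty^q\int_{B_1}|y|^{\sigma_1}\,dy<\infty,\]
because $\sigma_1 > -\beta\gamma > -n$.

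For the tail $|y|>1$, I would apply H\"older's inequality with a pair of conjugate exponents $a,b>1$:
\[\int_{|y|>1}|y|^{\sigma_1}v^q\,dy \le \Big(\int_{|y|>1}v^{qa}\,dy\Big)^{1/a}\Big(\int_{|y|>1}|y|^{\sigma_1 b}\,dy\Big)^{1/b}.\]
The weight integral converges provided $b > n/|\sigma_1|$; the case $\sigma_1=0$ is trivial since $|y|^{\sigma_1}\le 1$ on $|y|>1$. For the $v$-factor, Theorem \ref{integrability theorem} supplies $v\in L^s(\mathbb{R}^n)$ for every $s$ exceeding
\[s_*:=\max\Big\{\tfrac{n(\gamma-1)}{n-\beta\gamma},\ \tfrac{n(\gamma-1)}{p(n-\beta\gamma)/(\gamma-1)-(\beta\gamma+\sigma_2)}\Big\},\]
and interpolating with $L^\infty$ gives $v\in L^r$ for every $r>s_*$; thus the task reduces to choosing $a$ in the interval $(s_*/q,\,n/(n+\sigma_1))$.

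The main obstacle is verifying that this interval is nonempty, i.e., $qn/(n+\sigma_1)>s_*$. When $s_*$ equals the first of the two quantities, substituting $p\le q$ into the non-subcritical inequality yields $q\ge(\gamma-1)(n+\sigma_1+\sigma_2+\beta\gamma)/(n-\beta\gamma)$, and the strict bound $\sigma_2+\beta\gamma>0$ (which follows from $\sigma_2>-\beta\gamma$) promotes this to the desired strict inequality. When $s_*$ equals the second quantity, a short manipulation of the non-subcritical condition reduces the inequality to $pq>(\gamma-1)^2$, which is immediate from $p,q>1$ and $\gamma\in(1,2]$. Notably, $q=s_*$ can occur at the critical endpoint, so one cannot simply conclude $v\in L^q$ by interpolation; retaining the weight $|y|^{\sigma_1}$ is essential in that borderline situation.
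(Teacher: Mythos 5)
Your proof is correct and follows essentially the same route as the paper: split the integral at the unit sphere, bound the inner piece by $\|v\|_{\infty}^{q}$, and apply H\"older on the exterior with conjugate exponents chosen, in two cases according to which term realizes $s_*$, so that the weight is integrable and $v^{qa}$ falls in the admissible range of Theorem \ref{integrability theorem}. The two key inequalities you isolate, namely $q \geq (\gamma-1)(n+\sigma_1+\sigma_2+\beta\gamma)/(n-\beta\gamma)$ from the non-subcritical condition and the reduction to $pq > (\gamma-1)^2$ in the second case, are precisely the ones the paper uses.
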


\begin{proof}
There are two cases to consider. 

(i) First, assume $n-\beta\gamma \leq p(\frac{n-\beta\gamma}{\gamma-1}) - (\beta\gamma +\sigma_2)$. Indeed, since $q \geq p$ and $\sigma_1 \leq \sigma_2 $ in  $(-\beta\gamma,0]$, the non-subcritical condition \eqref{non-subcritical} implies $q \geq \frac{(\gamma-1)(n+\beta\gamma + 2\sigma_1)}{n-\beta\gamma}$. Now choose an appropriate $\varepsilon > 0$ so that $\varepsilon \in (\beta\gamma + 2\sigma_1,\beta\gamma + \sigma_1)$ and let $\ell = \frac{n+\varepsilon}{n+\beta\gamma+2\sigma_1}$ and $\ell' = \frac{n+\varepsilon}{\varepsilon - 2\sigma_1 - \beta\gamma}$. Therefore, $\frac{1}{\ell} + \frac{1}{\ell'} = 1$ with $\ell q > \frac{n(\gamma-1)}{n-\beta\gamma}$ and $\ell' > \frac{n}{-\sigma_1}$. Thus, H\"{o}lder's inequality and Theorem \ref{integrability theorem} imply
\begin{align*}
\int_{B_{1}(0)^{C}} |y|^{\sigma_1}v(y)^q \,dy \leq {} & \Big( \int_{B_{1}(0)^{C}} |y|^{\sigma_1 \frac{\ell}{\ell-1}} \,dy \Big)^{\frac{\ell-1}{\ell}} \Big(\int_{B_{1}(0)^{C}} v(y)^{\ell q} \,dy\Big)^{1/\ell} \\
\leq {} & C\Big( \int_{R}^{\infty} t^{n+\sigma_1 \ell'} \,\frac{dt}{t}\Big)^{\frac{1}{\ell'}}  \Big(\int_{B_{1}(0)^{C}} v(y)^{\ell q} \,dy\Big)^{1/\ell} \\
\leq {} & C\|v\|_{\ell q}^{q} < \infty.
\end{align*} 

(ii) Now assume $n-\beta\gamma > p(\frac{n-\beta\gamma}{\gamma-1}) - (\beta\gamma +\sigma_2)$. For some $\varepsilon \in (0,-\sigma_1)$, set $\ell = \frac{n}{n+\sigma_1 + \varepsilon}$ and $\ell' = \frac{n}{-\sigma_1 - \epsilon}$ so that $\frac{1}{\ell} + \frac{1}{\ell'} = 1$. Indeed, $pq > (\gamma-1)^2$ and the non-subcritical condition \eqref{non-subcritical} imply
\begin{align*}
\frac{(n+\sigma_1)(\gamma-1) + q(n+\sigma_2)}{q(\gamma-1 +p)} = {} & \frac{(n+\sigma_1)(\gamma-1)}{q(\gamma-1+p)} + \frac{n+\sigma_2}{\gamma-1+p}\\
< {} & \frac{n+\sigma_1}{\gamma - 1 + q} + \frac{n+\sigma_2}{\gamma -1 + p} 
\leq \frac{n-\beta\gamma}{\gamma-1}.
\end{align*}
It follows that
$$(n+\sigma_1)(\gamma-1)\frac{1}{q} < \frac{(n-\beta\gamma)(\gamma-1+p)}{\gamma-1} - (n+\sigma_2) = p(\frac{n-\beta\gamma}{\gamma-1}) - (\beta\gamma + \sigma_2)$$
and thus $\frac{n+\sigma_1}{n}\frac{1}{q} < \frac{p(\frac{n-\beta\gamma}{\gamma-1}) - (\beta\gamma + \sigma_2)}{n(\gamma-1)}.$
Then for $\varepsilon$ sufficiently small, 
$$lq > \frac{n(\gamma-1)}{p(\frac{n-\beta\gamma}{\gamma-1}) - (\beta\gamma + \sigma_2)} = \max \Bigg\lbrace \frac{n(\gamma-1)}{n-\beta\gamma}, \frac{n(\gamma-1)} {p(\frac{n-\beta\gamma}{\gamma-1}) - (\beta\gamma +\sigma_2) }\Bigg\rbrace.$$
Hence, H\"{o}lder's inequality and Theorem \ref{integrability theorem} imply
$$\int_{B_{1}(0)^C} |y|^{\sigma_1}v(y)^q \,dy \leq C\|v\|_{lq}^{q} < \infty. $$
Moreover, $\int_{B_{1}(0)} |y|^{\sigma_1}v(y)^q \,dy \leq C\|v\|_{\infty}^{q} < \infty$ since $\sigma_1 \in (-n,0]$. Hence, these calculations imply that $|y|^{\sigma_1}v^q \in L^{1}(\mathbb{R}^n)$.
\end{proof}

\begin{remark}
Of course if $\sigma_i = 0$, the preceding corollary states that $v$ belongs to $L^{q}(\mathbb{R}^n)$. Thus, since $\gamma-1 + q > q$ and the non-subcritical condition implies that $\gamma-1 + p > n(\gamma-1)(n-\beta\gamma)^{-1}$, we see from Corollary \ref{corollary} and Theorem \ref{integrability theorem} that $u,v$ are also finite-energy solutions i.e. $(u,v) \in L^{p+\gamma-1}(\mathbb{R}^n)\times L^{q+\gamma-1}(\mathbb{R}^n)$.
\end{remark}

The next result is a key step in establishing the fast decay rates of integrable solutions. Although we state the theorem here, we delay its proof until the final section in order to better illustrate the main ideas in our proof of Theorem \ref{fast theorem}. We remark that our need for this key result is due to the variable coefficients in the integral system. In contrast, for the constant coefficient case and with the help of the method of moving planes in integral form, the integrable solutions are indeed radially symmetric and the arguments for establishing the decay estimates become far simpler. However, for variable coefficients, the solutions may no longer have any radial symmetry. 

Here $\varphi \in C_{0}^{\infty}(B_{1}(0)\backslash B_{1/2}(0))$ is a cut-off function where $0\leq \varphi(x) \leq 1$ for $1/2\leq |x| \leq 1$ and $\varphi(x) = 1$ for $5/8 \leq |x| \leq 7/8$. Then there holds the following.

\begin{theorem}\label{boundedness property}
Let $(u,v)$ be a positive integrable solution of the integral system \eqref{Wolff} and set $\varphi_{r}(x) \doteq \varphi(\frac{x}{r})$ for any $r > 0$ and $$g(x) = v(x)|x|^{\frac{n+\sigma_1}{q}}\varphi_{r}(x).$$ Then there exists a positive constant $C$ independent of $r$ such that
\begin{equation}\label{cut-off}
g(x) \leq C \,\text{ for all }\, x.
\end{equation}
\end{theorem}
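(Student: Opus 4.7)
The plan is to argue by contradiction via a blow-up/rescaling argument. If \eqref{cut-off} fails, there exist sequences $r_k > 0$ and $x_k \in \mathbb{R}^n$, which I may choose in the region $\{5r_k/8 \leq |x| \leq 7r_k/8\}$ where $\varphi_{r_k} \equiv 1$, such that
\[
M_k := v(x_k)\,|x_k|^{(n+\sigma_1)/q} \longrightarrow \infty.
\]
Since Theorem \ref{integrability theorem} gives $v \in L^\infty(\mathbb{R}^n)$ with $v(x)\to 0$ as $|x|\to\infty$, this forces $r_k\to\infty$, $|x_k|\to\infty$, and $v(x_k)\to 0$, so that the blow-up takes place far from the origin.

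The key observation is that $(n+\sigma_1)/q$ is the unique exponent making the measure $|y|^{\sigma_1} v^q\,dy$ on balls invariant under the dilation $v(y)\mapsto \lambda^{(n+\sigma_1)/q}v(\lambda y)$. Accordingly, I would set $\lambda_k := v(x_k)^{-q/(n+\sigma_1)}\to\infty$ and introduce
\[
V_k(\xi) := \lambda_k^{(n+\sigma_1)/q}\, v(x_k + \lambda_k\xi), \qquad U_k(\xi) := \lambda_k^{\delta}\, u(x_k + \lambda_k\xi),
\]
with $\delta := [\beta\gamma + (\gamma-1)(n+\sigma_1)/q]/p$ chosen so that, under the change of variables, the second integral equation of \eqref{Wolff} transforms exactly into its analogue for $(U_k,V_k)$. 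By construction $V_k(0)=1$, the shifted coefficients $c_i(x_k+\lambda_k\xi)$ remain double bounded, and since $\lambda_k/|x_k|\to 0$ the shifted weights $|x_k/\lambda_k+\eta|^{\sigma_i}$ are locally uniformly comparable to positive constants. The first integral equation does not scale into itself in general; it acquires a residual factor $\lambda_k^{\kappa}$ whose sign is controlled by the non-subcritical condition \eqref{non-subcritical} together with the ordering $q\geq p$, $\sigma_1\leq\sigma_2$.

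I would next invoke a doubling/almost-maximum lemma of Pol\'a\v{c}ik--Quittner--Souplet type applied to $g_{r_k}$ on $\operatorname{supp}\varphi_{r_k}$: this relocates $x_k$ to a nearby point where $g_{r_k}$ is locally bounded above by twice its own value, producing after rescaling a locally uniform bound on $V_k$ on fixed balls. Together with Corollary \ref{corollary}, which converts to a uniform tail estimate of the form $\int_{|\eta|\geq R}|x_k/\lambda_k+\eta|^{\sigma_2}\,U_k^p\,d\eta \to 0$ as $R\to\infty$, this yields enough compactness to extract subsequential limits $(U_k,V_k)\to(U_\infty,V_\infty)$ satisfying a constant-coefficient Wolff-type integral system on $\mathbb{R}^n$ with $V_\infty(0)=1$. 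A Liouville-type conclusion for the limit system, ruling out nontrivial nonnegative solutions compatible with the inherited mass bounds, delivers the contradiction.

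The principal obstacle will be the limit passage inside the nonlocal, nonlinear Wolff potential: one must carefully split the defining integral into near and far regions, applying the doubling bound for the near part and using Corollary \ref{corollary}, the shifted weights, and the residual factor $\lambda_k^\kappa$ in the first equation to control the far part, keeping all constants uniform in $k$ while $\lambda_k\to\infty$ and $\lambda_k/|x_k|\to 0$. The interplay between the two distinct natural scalings---the ``mass-preserving'' one used for $v$ and the ``system-preserving'' one that would send $v$ to $\lambda^{p_0} v(\lambda\cdot)$---is where the non-subcritical condition \eqref{non-subcritical} enters decisively.
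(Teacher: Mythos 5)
Your proposal is a strategy outline rather than a proof, and the two steps on which everything hinges are precisely the ones left unestablished. First, the ``Liouville-type conclusion for the limit system'' is not available: for the constant-coefficient Wolff/HLS-type system the nonexistence of positive solutions in the subcritical regime is exactly the open Lane--Emden/HLS-type conjecture recalled in the introduction, so you cannot invoke it as a black box. The only way your blow-up could close without such a theorem is if the residual factor $\lambda_k^{\kappa}$ in the rescaled first equation tends to $0$, forcing $U_\infty\equiv 0$ and hence $V_\infty\equiv 0$, contradicting $V_\infty(0)=1$; the sign $\kappa<0$ does follow from \eqref{non-subcritical} (it is equivalent to the inequality $(n+\sigma_1)(\gamma-1)/q < p(\tfrac{n-\beta\gamma}{\gamma-1})-(\beta\gamma+\sigma_2)$ derived in the proof of Corollary \ref{corollary}), but you neither compute $\kappa$ nor verify this, and your formula for $\delta$ omits the $\sigma_2$ needed to make the second equation scale-invariant. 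Second, the compactness you need to pass to the limit in the nonlocal Wolff potential of the \emph{second} equation requires a uniform tail bound on $\int |x_k/\lambda_k+\eta|^{\sigma_2}U_k^p\,d\eta$, which would follow from $|y|^{\sigma_2}u^p\in L^1(\mathbb{R}^n)$; but since $u(x)\simeq |x|^{-\frac{n-\beta\gamma}{\gamma-1}}$, this integrability \emph{fails} whenever $p(\tfrac{n-\beta\gamma}{\gamma-1})-\sigma_2\le n$, i.e.\ in exactly the cases where the theorem is hardest (Corollary \ref{corollary} controls only $|y|^{\sigma_1}v^q$). So the far-field part of the rescaled second equation is not controlled by the tools you cite.

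For comparison, the paper avoids blow-up entirely. It takes a maximum point $x_{r_j}$ of $g$ on the annulus, a nearby maximum point $\overline{x}_{r_j}$ of $u$, and derives two reciprocal pointwise estimates, $u(\overline{x}_{r_j})\le \varepsilon\, v(x_{r_j})^{1+\varepsilon_1}+C|x_{r_j}|^{-\frac{n-\beta\gamma}{\gamma-1}}$ and $v(x_{r_j})\le u(\overline{x}_{r_j})^{\frac{1}{1+\varepsilon_1}}+C|x_{r_j}|^{-\frac{n-\beta\gamma-\varepsilon_2}{\gamma-1}}$, by splitting each Wolff integral at scales $\rho$ and $s_i|x_{r_j}|$ and applying H\"older's inequality with exponents drawn from the optimal integrability intervals of Theorem \ref{integrability theorem} and the $L^1$ bound of Corollary \ref{corollary}; combining and absorbing yields $v(x_{r_j})\le C|x_{r_j}|^{-\frac{n-\beta\gamma}{(\gamma-1)(1+\varepsilon_1)}}$, which contradicts $v(x_{r_j})\ge c|x_{r_j}|^{-\frac{n+\sigma_1}{q}}$ precisely because $\varepsilon_1<\frac{q(n-\beta\gamma)}{(n+\sigma_1)(\gamma-1)}-1$. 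The case distinctions on the sign of $p(\tfrac{n-\beta\gamma}{\gamma-1})-\sigma_2-n$, which sink your tail estimate, are handled there by choosing the H\"older exponents $\ell,\varepsilon_2'$ case by case. If you wish to pursue the blow-up route you would need to supply a substitute for the missing $L^1$ control of $|y|^{\sigma_2}u^p$ and carry out the limit passage in full; as written the argument does not stand.
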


\section{Fast decay rates of positive solutions}\label{section fast decay}
Throughout this section, $u,v$ are understood to be positive integrable solutions of system \eqref{Wolff} unless further specified.

\subsection{Fast decay rate for $u(x)$}
\begin{proposition}\label{bounded below}
For suitably large $|x|$, there exists a positive constant $c$ such that
$$ u(x),v(x) \geq c|x|^{-\frac{n-\beta\gamma}{\gamma-1}}. $$
\end{proposition}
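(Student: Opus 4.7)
The proof is essentially the lower-bound computation sketched in the remark following Theorem \ref{integrability theorem}, made rigorous. The plan is to restrict the Wolff potential to a strategically chosen range of radii $t$ on which the inner integral admits a uniform positive lower bound, and then evaluate the resulting elementary radial integral.

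First I would exploit the positivity of $v$ (it is a positive continuous solution, being bounded and tending to $0$ at infinity by Theorem \ref{integrability theorem}) to fix once and for all a radius $R \geq 1$ such that
\[
c_0 \doteq \int_{B_{R}(0)} |y|^{\sigma_1} v(y)^q \, dy > 0,
\]
which is finite by Corollary \ref{corollary}. The key geometric observation is that for $|x| \geq R$ and any $t \in [|x|+R,\, 2|x|]$ (an interval which is nonempty once $|x| \geq 2R$), the ball $B_{t}(x)$ contains $B_{R}(0)$ because $|0-x| + R \leq |x| + R \leq t$. Consequently, for such $t$,
\[
\int_{B_{t}(x)} |y|^{\sigma_1} v(y)^q \, dy \; \geq \; c_0 > 0.
\]

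Next I would insert this lower bound into the first equation of \eqref{Wolff}, using the double-boundedness of $c_1(x)$:
\[
u(x) \;\geq\; C^{-1}\!\!\int_{|x|+R}^{2|x|} \left(\frac{c_0}{t^{n-\beta\gamma}}\right)^{\!\!\frac{1}{\gamma-1}} \frac{dt}{t}
\;=\; C^{-1} c_0^{\frac{1}{\gamma-1}} \int_{|x|+R}^{2|x|} t^{-\frac{n-\beta\gamma}{\gamma-1}} \, \frac{dt}{t}.
\]
Writing $\kappa = (n-\beta\gamma)/(\gamma-1) > 0$, the remaining integral equals
\[
\frac{1}{\kappa}\bigl[(|x|+R)^{-\kappa} - (2|x|)^{-\kappa}\bigr]
\;\sim\; \frac{1 - 2^{-\kappa}}{\kappa}\,|x|^{-\kappa}
\]
as $|x| \to \infty$, so that $u(x) \geq c|x|^{-\frac{n-\beta\gamma}{\gamma-1}}$ for all sufficiently large $|x|$.

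The same argument, applied to the second equation of \eqref{Wolff} with the roles of $u$ and $v$ interchanged (and using a radius $R'$ chosen via positivity of $u$ so that $\int_{B_{R'}(0)} |y|^{\sigma_2} u^p \, dy > 0$), yields the matching lower bound $v(x) \geq c|x|^{-\frac{n-\beta\gamma}{\gamma-1}}$ for large $|x|$. I do not anticipate any serious obstacle: the only subtlety is verifying the inclusion $B_R(0) \subset B_t(x)$ on the chosen range of $t$, which is a one-line triangle inequality, and ensuring that $c_0$ and its analogue for $u$ are strictly positive and finite, which follows from the positivity and continuity of integrable solutions together with Corollary \ref{corollary}.
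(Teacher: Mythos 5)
Your argument is correct and is essentially the paper's proof: the paper likewise lower-bounds the inner integral by $\int_{B_1(0)}|y|^{\sigma_1}v(y)^q\,dy>0$ for $t\geq 1+|x|$ (so that $B_1(0)\subset B_t(x)$) and then evaluates $\int_{1+|x|}^{\infty}t^{(\beta\gamma-n)/(\gamma-1)}\,dt/t\geq c|x|^{-(n-\beta\gamma)/(\gamma-1)}$. Your choice of the finite window $[|x|+R,2|x|]$ in place of $[1+|x|,\infty)$ is an immaterial variation.
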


\begin{proof}
For large $|x|$, it is clear that
\begin{align*}
u(x) \geq {} & c\int_{1 + |x|}^{\infty} \Big( \frac{\int_{B_{1}(0)} |y|^{\sigma_1}v(y)^q \,dy}{t^{n-\beta\gamma}} \Big)^{\frac{1}{\gamma-1}} \,\frac{dt}{t} \notag \\
\geq {} & c\int_{1+|x|}^{\infty} t^{\frac{\beta\gamma - n}{\gamma - 1}} \,\frac{dt}{t} \geq c|x|^{-\frac{n-\beta\gamma}{\gamma-1}}.
\end{align*}
The lower bound for $v(x)$ follows similarly and this completes the proof.
\end{proof}

\begin{proposition}\label{decay u}
There holds $u(x) \simeq |x|^{-\frac{n-\beta\gamma}{\gamma-1}}$.
\end{proposition}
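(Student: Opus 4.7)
The lower bound $u(x) \geq c|x|^{-(n-\beta\gamma)/(\gamma-1)}$ is furnished by Proposition \ref{bounded below}, so the task reduces to the matching upper bound. My plan is to bound the Wolff potential
\[
u(x) \leq C\int_0^\infty\Bigl(\frac{\int_{B_t(x)}|y|^{\sigma_1}v(y)^q\,dy}{t^{n-\beta\gamma}}\Bigr)^{\frac{1}{\gamma-1}}\frac{dt}{t}
\]
by splitting the radial integral at the scale $t = |x|/2$ into an inner piece $I_{1}$ (with $t \leq |x|/2$) and an outer piece $I_{2}$ (with $t \geq |x|/2$), and to show that each contributes at most $C|x|^{-(n-\beta\gamma)/(\gamma-1)}$.

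For the outer piece $I_{2}$, I would use only the global integrability recorded in Corollary \ref{corollary}: setting $M \doteq \int_{\mathbb{R}^n}|y|^{\sigma_1}v^q\,dy < \infty$, one has $\int_{B_t(x)}|y|^{\sigma_1}v^q\,dy \leq M$ for every $t$. Since $\beta\gamma < n$ and $\gamma > 1$, the tail integral $\int_{|x|/2}^{\infty} t^{-(n-\beta\gamma)/(\gamma-1)-1}\,dt$ converges and produces exactly $CM^{1/(\gamma-1)}|x|^{-(n-\beta\gamma)/(\gamma-1)}$, the desired rate.

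For the inner piece $I_{1}$ the global $L^{1}$ bound is too coarse, because I need to retain the volume factor $t^{n}$ of $B_{t}(x)$ in order to cancel the singular $t^{-n+\beta\gamma}$. The extra input is a pointwise upper bound on $v$, which I extract from Theorem \ref{boundedness property}: for each $y \neq 0$ choose $r$ so that $y/r$ lies in the region where $\varphi \equiv 1$ (for instance $r = 8|y|/7$), and the uniform-in-$r$ estimate $v(y)|y|^{(n+\sigma_1)/q}\varphi_r(y) \leq C$ upgrades at once to
\[
v(y) \leq C|y|^{-(n+\sigma_1)/q} \qquad \text{for all } y \neq 0.
\]
On $B_t(x)$ with $t \leq |x|/2$, one has $|y| \simeq |x|$; combining this observation with $\sigma_1 \leq 0$ and the preceding pointwise bound yields $\int_{B_t(x)}|y|^{\sigma_1}v^q\,dy \leq C|x|^{-n}t^{n}$. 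Substituting and performing the $t$-integration then gives
\[
I_{1} \leq C|x|^{-n/(\gamma-1)}\int_0^{|x|/2} t^{\beta\gamma/(\gamma-1)-1}\,dt = C|x|^{-(n-\beta\gamma)/(\gamma-1)},
\]
again the desired rate.

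The hard part is not any of these scale-splitting computations, which are routine once set up, but rather the pointwise decay $v(y) \leq C|y|^{-(n+\sigma_1)/q}$ itself. In the constant-coefficient setting one would recover such a bound from the radial symmetry of $(u,v)$ produced by moving planes; for variable coefficients that symmetry is unavailable, and the pointwise bound must instead be imported from the deferred Theorem \ref{boundedness property}. Once that ingredient is granted, the proposition reduces to the elementary integrations sketched above.
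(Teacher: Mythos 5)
Your proposal is correct and follows essentially the same route as the paper: the same splitting at $t=|x|/2$, the same use of Corollary \ref{corollary} for the tail, and the same pointwise bound $v(y)\leq C|y|^{-(n+\sigma_1)/q}$ extracted from Theorem \ref{boundedness property} for the inner piece. Your explicit remark on how the uniform-in-$r$ cut-off estimate upgrades to the pointwise decay of $v$ is exactly the (implicit) step in the paper's argument.
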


\begin{proof}

In view of Proposition \ref{bounded below}, it only remains to show that there exists a positive constant $C$ such that 
\begin{equation}
u(x) \leq C|x|^{-\frac{n-\beta\gamma}{\gamma-1}} \,\text{ for suitably large }\, |x|.
\end{equation}

We consider two cases: (i) Let $t \leq |x|/2$. Then $y \in B_{t}(x)$ implies that $|x|/2 \leq |y| \leq 3|x|/2$, and by virtue of Theorem \ref{boundedness property}, 
$$|y|^{\sigma_1}v(y)^{q} \leq C|y|^{\sigma_1}(|y|^{-\frac{n+\sigma_1}{q}})^{q} \leq C|y|^{-n} \leq C|x|^{-n}.$$
Hence, there holds
\begin{equation*}
\int_{0}^{|x|/2} \Big( \frac{\int_{B_{t}(x)} |y|^{\sigma_1}v(y)^q \,dy}{t^{n-\beta\gamma}} \Big)^{\frac{1}{\gamma-1}} \,\frac{dt}{t} 
\leq C|x|^{-\frac{n}{\gamma-1}}\int_{0}^{|x|/2} t^{\frac{\beta\gamma}{\gamma-1}} \,\frac{dt}{t} \leq C|x|^{-\frac{n-\beta\gamma}{\gamma-1}}.
\end{equation*}
(ii) Suppose $t > |x|/2$. According to Corollary \ref{corollary}, $|x|^{\sigma_1}v(x)^q\in L^{1}(\mathbb{R}^n)$, so
\begin{equation*} 
\int_{|x|/2}^{\infty} \Big( \frac{\int_{B_{t}(x)} |y|^{\sigma_1}v(y)^q \,dy}{t^{n-\beta\gamma}} \Big)^{\frac{1}{\gamma-1}} \,\frac{dt}{t} \leq C \int_{|x|/2}^{\infty} t^{\frac{\beta\gamma-n}{\gamma - 1}} \,\frac{dt}{t} \leq C|x|^{-\frac{n-\beta\gamma}{\gamma-1}}.
\end{equation*}
By combining the last two estimates, we arrive at
$$ u(x) = c_{1}(x)W_{\beta,\gamma}(|y|^{\sigma_1}v^q)(x) \leq C|x|^{-\frac{n-\beta\gamma}{\gamma-1}} \,\text{ for large }\, |x|.$$
This completes the proof.

\end{proof}

\subsection{Fast decay rates for $v(x)$}

\begin{proposition}\label{decay v1}
If $p(\frac{n-\beta\gamma}{\gamma-1}) - \sigma_2 > n$, then $v(x) \simeq |x|^{-\frac{n-\beta\gamma}{\gamma-1}}$. 
\end{proposition}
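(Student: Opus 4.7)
The lower bound is already covered by Proposition \ref{bounded below}, so the task is to prove that there exists $C>0$ with $v(x)\leq C|x|^{-\frac{n-\beta\gamma}{\gamma-1}}$ for sufficiently large $|x|$. I would follow exactly the template of the proof of Proposition \ref{decay u}, splitting the defining Wolff integral for $v(x)$ at the radius $t=|x|/2$ and estimating each piece separately.

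For the inner range $t\leq |x|/2$, any $y\in B_{t}(x)$ satisfies $|x|/2\leq |y|\leq 3|x|/2$. Since $\sigma_2\leq 0$ and since Proposition \ref{decay u} gives $u(y)\leq C|y|^{-\frac{n-\beta\gamma}{\gamma-1}}$ with a negative exponent, I would bound the integrand uniformly by
\[
|y|^{\sigma_2}u(y)^p\leq C|x|^{\sigma_2-p\frac{n-\beta\gamma}{\gamma-1}},
\]
yielding $\int_{B_t(x)}|y|^{\sigma_2}u^p\,dy\leq C|x|^{\sigma_2-p\frac{n-\beta\gamma}{\gamma-1}}t^{n}$. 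Inserting this into the Wolff integral and integrating $t$ from $0$ to $|x|/2$ produces the bound
\[
C|x|^{\frac{\sigma_2+\beta\gamma - p\frac{n-\beta\gamma}{\gamma-1}}{\gamma-1}}.
\]
The hypothesis $p\frac{n-\beta\gamma}{\gamma-1}-\sigma_2>n$ is equivalent to $\sigma_2+\beta\gamma-p\frac{n-\beta\gamma}{\gamma-1}<-(n-\beta\gamma)$, so this piece is dominated by $C|x|^{-\frac{n-\beta\gamma}{\gamma-1}}$.

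For the outer range $t>|x|/2$, the plan is to exploit the global integrability $|y|^{\sigma_2}u^p\in L^1(\mathbb{R}^n)$, exactly mirroring how Corollary \ref{corollary} was used in the proof of Proposition \ref{decay u}. Then
\[
\int_{|x|/2}^{\infty}\Big(\frac{\int_{B_t(x)}|y|^{\sigma_2}u^p\,dy}{t^{n-\beta\gamma}}\Big)^{\frac{1}{\gamma-1}}\,\frac{dt}{t}\leq C\int_{|x|/2}^{\infty} t^{\frac{\beta\gamma-n}{\gamma-1}}\,\frac{dt}{t}\leq C|x|^{-\frac{n-\beta\gamma}{\gamma-1}},
\]
since $\beta\gamma<n$. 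Combining the two pieces, and using that $c_2(x)$ is double bounded, yields the desired upper bound.

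The one step that is not literally already in the paper is the $L^1$ integrability of $|y|^{\sigma_2}u^p$, which I expect to be the mildest obstacle. Near the origin it is immediate from $u\in L^\infty(\mathbb{R}^n)$ (Theorem \ref{integrability theorem}) and $\sigma_2>-n$. For the tail I would use the upper bound from Proposition \ref{decay u} to write
\[
\int_{|y|>1}|y|^{\sigma_2}u(y)^p\,dy\leq C\int_{|y|>1}|y|^{\sigma_2-p\frac{n-\beta\gamma}{\gamma-1}}\,dy,
\]
which converges precisely because the hypothesis $p\frac{n-\beta\gamma}{\gamma-1}-\sigma_2>n$ ensures the integrand decays faster than $|y|^{-n}$. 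Thus the hypothesis of the proposition plays a double role: it provides the $L^1$ control needed for the outer piece and it makes the exponent coming from the inner piece at most $-\frac{n-\beta\gamma}{\gamma-1}$.
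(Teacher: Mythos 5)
Your proposal is correct and follows essentially the same route as the paper: split the Wolff integral at $t=|x|/2$, use the upper bound for $u$ from Proposition \ref{decay u} on the inner piece, and use $|y|^{\sigma_2}u^p\in L^{1}(\mathbb{R}^n)$ on the outer piece. The only (harmless) difference is that you justify the $L^1$ bound directly from the pointwise decay of $u$ and the hypothesis $p(\tfrac{n-\beta\gamma}{\gamma-1})-\sigma_2>n$, whereas the paper obtains it by repeating the H\"older-plus-integrability argument of Corollary \ref{corollary}.
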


\begin{proof}
Consider the splitting
\begin{equation*}
v(x) \leq C\Big( \int_{0}^{|x|/2} + \int_{|x|/2}^{\infty} \Big) \Big( \frac{\int_{B_{t}(x)} |y|^{\sigma_2}u(y)^p \,dy}{t^{n-\beta\gamma}} \Big)^{\frac{1}{\gamma-1}} \,\frac{dt}{t} \doteq C(I_1 + I_2).
\end{equation*}
For large $|x|$ there holds
\begin{equation*}
I_1 = \int_{0}^{|x|/2} \Big( \frac{\int_{B_{t}(x)} |y|^{\sigma_2}u(y)^p \,dy}{t^{n-\beta\gamma}} \Big)^{\frac{1}{\gamma-1}} \,\frac{dt}{t} \leq C|x|^{p\frac{\beta\gamma-n}{(\gamma-1)^2} + \frac{\beta\gamma + \sigma_2}{\gamma-1}} \leq C|x|^{-\frac{n-\beta\gamma}{\gamma-1}},
\end{equation*}
since $p(\frac{n-\beta\gamma}{\gamma-1}) - \sigma_2 > n$ and $u(x) \leq C|x|^{-\frac{n-\beta\gamma}{\gamma-1}}$. 

It remains to estimate $I_2$. Using similar calculations in the proof of Corollary \ref{corollary}, we can show that $|y|^{\sigma_2}u^p \in L^{1}(\mathbb{R}^n)$ in this case. Therefore, 
\begin{equation*}
I_2 = \int_{|x|/2}^{\infty} \Big( \frac{\int_{B_{t}(x)} |y|^{\sigma_2}u(y)^p \,dy}{t^{n-\beta\gamma}} \Big)^{\frac{1}{\gamma-1}} \,\frac{dt}{t} \leq C|x|^{-\frac{n-\beta\gamma}{\gamma-1}}.
\end{equation*}
Hence, these estimates for $I_1$ and $I_2$ together with Proposition \ref{bounded below} complete the proof.
\end{proof}

\begin{proposition}\label{decay v2}
If $p(\frac{n-\beta\gamma}{\gamma-1}) - \sigma_2 = n$, then $v(x) \simeq |x|^{-\frac{n-\beta\gamma}{\gamma-1}}(\ln |x|)^{\frac{1}{\gamma-1}}$. 
\end{proposition}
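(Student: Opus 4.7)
The plan is to mimic the splitting $v(x)\leq C(I_1+I_2)$ used in Proposition \ref{decay v1}, but now the exponent condition $p(\tfrac{n-\beta\gamma}{\gamma-1})-\sigma_2=n$ is precisely the borderline integrability threshold of $|y|^{\sigma_2}u(y)^p$ at infinity, so the logarithmic factor appears naturally in the tail integral $I_2$.

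For the upper bound, I would first dispose of $I_1$. Using Proposition \ref{decay u} together with the fact that $|y|\simeq|x|$ whenever $y\in B_t(x)$ and $t\leq|x|/2$, the integrand inside $B_t(x)$ is bounded by
$$|y|^{\sigma_2}u(y)^p\leq C|x|^{\sigma_2-p\frac{n-\beta\gamma}{\gamma-1}}=C|x|^{-n},$$
which gives, after integrating in $t$, the bound $I_1\leq C|x|^{-(n-\beta\gamma)/(\gamma-1)}$. For $I_2$ I would split $B_t(x)$ into $B_R(0)$ and its complement for some fixed large $R$ past which $u(y)\leq C|y|^{-(n-\beta\gamma)/(\gamma-1)}$; the bounded piece contributes $O(1)$, while on $|y|>R$ one has $|y|^{\sigma_2}u(y)^p\leq C|y|^{-n}$, and since $B_t(x)\subset B_{t+|x|}(0)$, a direct polar computation yields
$$\int_{B_t(x)}|y|^{\sigma_2}u(y)^p\,dy\leq C\ln t\quad\text{for }t\geq|x|/2,\ |x|\text{ large}.$$
Plugging this into the $I_2$ integral reduces the matter to estimating
$$\int_{|x|/2}^{\infty}(\ln t)^{\frac{1}{\gamma-1}}\,t^{-\frac{n-\beta\gamma}{\gamma-1}-1}\,dt,$$
which by integration by parts (or the substitution $s=\ln t$) is easily seen to behave like $(\ln|x|)^{1/(\gamma-1)}|x|^{-(n-\beta\gamma)/(\gamma-1)}$ as $|x|\to\infty$. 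Combining gives the desired upper bound on $v(x)$.

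For the matching lower bound, Proposition \ref{bounded below} already supplies $u(y)\geq c|y|^{-(n-\beta\gamma)/(\gamma-1)}$ at infinity, which gives $|y|^{\sigma_2}u(y)^p\geq c|y|^{-n}$ for $|y|$ large. Restricting $t$ to the range $t\geq 2|x|$ so that $B_t(x)\supset B_{t/2}(0)$, one obtains $\int_{B_t(x)}|y|^{\sigma_2}u(y)^p\,dy\geq c\ln t$, and then
$$v(x)\geq c\int_{2|x|}^{\infty}\Big(\frac{\ln t}{t^{n-\beta\gamma}}\Big)^{\frac{1}{\gamma-1}}\frac{dt}{t}\geq c\,(\ln|x|)^{\frac{1}{\gamma-1}}|x|^{-\frac{n-\beta\gamma}{\gamma-1}}.$$

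The main obstacle is verifying the logarithmic asymptotics of the tail integral precisely: one must show, at both ends, that the contribution of the integral $\int^{\infty}(\ln t)^{1/(\gamma-1)}t^{-(n-\beta\gamma)/(\gamma-1)-1}\,dt$ is controlled from above \emph{and} below by a constant multiple of $(\ln|x|)^{1/(\gamma-1)}|x|^{-(n-\beta\gamma)/(\gamma-1)}$. This is the place where the critical exponent enters nontrivially, and it requires a careful integration-by-parts argument together with the fact that $\ln(t+|x|)/\ln t\to 1$ for $t\geq|x|/2$; once this asymptotic is in hand, the proof concludes exactly as in Proposition \ref{decay v1}.
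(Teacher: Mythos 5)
Your proposal is correct and follows essentially the same route as the paper: the same splitting of the Wolff integral at a radius comparable to $|x|$, the same identification of the critical exponent $p(\tfrac{n-\beta\gamma}{\gamma-1})-\sigma_2=n$ as the source of the logarithm via $|y|^{\sigma_2}u(y)^p\simeq|y|^{-n}$, and the same ball inclusions $B_{t}(x)\subset B_{t+|x|}(0)$ and $B_t(x)\supset B_{t-|x|}(0)$ for the two bounds. The only cosmetic difference is that the paper splits at $\lambda|x|$ and invokes a precise limit identity for $\int_{\lambda|x|}^{\infty}(\ln t)^{1/(\gamma-1)}t^{-(n-\beta\gamma)/(\gamma-1)}\,\frac{dt}{t}$ before sending $\lambda\to1$, whereas you derive the same tail asymptotics directly by integration by parts, which suffices for the $\simeq$ conclusion.
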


\begin{proof}
{\it Step 1:} For any $\lambda > 1$ if $t > \lambda|x|$, then $B_{t-|x|}(0) \subset B_{t}(x)$. Then from Proposition \ref{bounded below}, we can find a suitably small $R>0$ such that
\begin{align*}
v(x) \geq {} & c\int_{\lambda|x|}^{\infty} \Big( \frac{\int_{B_{t-|x|}(0)\backslash B_{R}(0)} |y|^{\sigma_2}u(y)^p \,dy}{t^{n-\beta\gamma}}  \Big)^{\frac{1}{\gamma-1}} \,\frac{dt}{t} \\
\geq {} & c\int_{\lambda |x|}^{\infty} \Big( \frac{\int_{R}^{t-|x|} r^{n + \sigma_2 - p(\frac{n-\beta\gamma}{\gamma-1})} \,\frac{dr}{r} }{t^{n-\beta\gamma}}  \Big)^{\frac{1}{\gamma-1}} \,\frac{dt}{t} \\
\geq {} & c\int_{\lambda |x|}^{\infty} \Big( \frac{\int_{1-1/\lambda}^{(1-1/\lambda)t} \,\frac{dr}{r} }{t^{n-\beta\gamma}}  \Big)^{\frac{1}{\gamma-1}} \,\frac{dt}{t}
\geq c \int_{\lambda |x|}^{\infty} \Big( \frac{\ln t}{t^{n-\beta\gamma}} \Big)^{\frac{1}{\gamma-1}} \,\frac{dt}{t}.
\end{align*}
From this we deduce that 
\begin{equation}\label{J0}
\lim_{|x|\longrightarrow\infty} \frac{|x|^{\frac{n-\beta\gamma}{\gamma-1}}}{(\ln |x|)^{\frac{1}{\gamma - 1}}}v(x) \geq c > 0,
\end{equation}
which follows after sending $\lambda \longrightarrow 1$ in the following identity (see (4.1) in \cite{SunLei2012}):
\begin{equation}\label{log property} 
\lim_{|x|\longrightarrow \infty} \frac{|x|^{\frac{n-\beta\gamma}{\gamma-1}}}{(\ln \lambda |x|)^{\frac{1}{\gamma - 1}}}\int_{\lambda |x|}^{\infty} \Big( \frac{\ln t}{t^{n-\beta\gamma}} \Big)^{\frac{1}{\gamma-1}} \,\frac{dt}{t} = \frac{\gamma-1}{n-\beta\gamma}\lambda^{-\frac{n-\beta\gamma}{\gamma-1}} \,~\, (\lambda > 0).
\end{equation}
\medskip

\noindent{\it Step 2:} We estimate the terms $J_1$ and $J_2$ where
\begin{align*}
v(x) \leq C\Big( \int_{0}^{\lambda |x|} + \int_{\lambda |x|}^{\infty} \Big) \Big( \frac{\int_{B_{t}(x)} |y|^{\sigma_2}u(y)^p \,dy}{t^{n-\beta\gamma}} \Big)^{\frac{1}{\gamma-1}} \,\frac{dt}{t} \doteq C(J_1 + J_2)
\end{align*}
with $\lambda \in (1/2,1)$ and $|x|$ is large. Then for $0\leq t \leq \lambda |x|$ and $y\in B_{t}(x)$, Proposition \ref{decay u} implies that $u(y) \leq C|y|^{-\frac{n-\beta\gamma}{\gamma-1}} \leq C|x|^{-\frac{n-\beta\gamma}{\gamma-1}}$. Therefore,
\begin{equation*}
J_{1} \leq C|x|^{-\frac{p(n-\beta\gamma)}{(\gamma-1)^2} + \frac{\sigma_2}{\gamma-1}}\int_{0}^{\lambda |x|} t^{\frac{\beta\gamma}{\gamma-1}} \,\frac{dt}{t} \leq C|x|^{-\frac{p(n-\beta\gamma)}{(\gamma-1)^2} + \frac{\sigma_2 + \beta\gamma}{\gamma-1}} \leq C|x|^{-\frac{n-\beta\gamma}{\gamma-1}} ,
\end{equation*}
since $p\frac{n-\beta\gamma}{\gamma-1} - \sigma_2 = n$ implies
$$-\frac{p(n-\beta\gamma)}{(\gamma-1)^2} + \frac{\sigma_2 + \beta\gamma}{\gamma-1} = -\frac{n-\beta\gamma}{\gamma-1}. $$
Hence,
\begin{equation}\label{J1}
\lim_{|x|\longrightarrow\infty} |x|^{\frac{n-\beta\gamma}{\gamma-1}}(\ln |x|)^{-\frac{1}{\gamma-1}}J_1 = 0.
\end{equation}
In view of $B_{t}(x) \subset B_{|x|+t}(0)$ and Jensen's inequality, we can write
\begin{align*}
J_{2} \leq {} & C\int_{\lambda |x|}^{\infty} \Big( \frac{\int_{B_{1}(0)} |y|^{\sigma_2} u(y)^p \,dy + \int_{B_{|x| + t}(0)\backslash B_{1}(0)} |y|^{\sigma_2} u(y)^p \,dy }{t^{n-\beta\gamma}} \Big)^{\frac{1}{\gamma-1}} \,\frac{dt}{t} \\
\leq {} & C\int_{\lambda |x|}^{\infty} \Big( \frac{\int_{B_{1}(0)} |y|^{\sigma_2} u(y)^p \,dy}{t^{n-\beta\gamma}} \Big)^{\frac{1}{\gamma-1}} + \Big( \frac{\int_{B_{|x| + t}(0)\backslash B_{1}(0)} |y|^{\sigma_2} u(y)^p \,dy}{t^{n-\beta\gamma}} \Big)^{\frac{1}{\gamma-1}} \,\frac{dt}{t} \\
\doteq {} & C(J_3 + J_4).
\end{align*}
Since $\int_{B_{1}(0)} |y|^{\sigma_2}u(y)^p \,dy \leq C$,
\begin{align*}
J_{3} \leq C\int_{\lambda |x|}^{\infty} \Big( \frac{\int_{B_{1}(0)} |y|^{\sigma_2} u(y)^p \,dy}{t^{n-\beta\gamma}} \Big)^{\frac{1}{\gamma-1}} \,\frac{dt}{t} \leq C\int_{\lambda |x|}^{\infty} t^{\frac{\beta\gamma - n}{\gamma-1}} \,\frac{dt}{t} \leq C|x|^{-\frac{n-\beta\gamma}{\gamma-1}}.
\end{align*}
Likewise, Proposition \ref{decay u} implies that
\begin{align*}
J_{4} \leq {} & C\int_{\lambda |x|}^{\infty} \Big( \frac{\int_{B_{t+|x|}(0)\backslash B_{1}(0)} |y|^{\sigma_2 - p(\frac{n-\beta\gamma}{\gamma-1})} \,dy }{t^{n-\beta\gamma}} \Big)^{\frac{1}{\gamma-1}} \,\frac{dt}{t} \\
\leq {} & C \int_{\lambda |x|}^{\infty} \Big( \frac{\int_{1}^{t+|x|} r^{\sigma_2 - p(\frac{n-\beta\gamma}{\gamma-1}) + n} \,\frac{dr}{r} }{t^{n-\beta\gamma}} \Big)^{\frac{1}{\gamma-1}} \,\frac{dt}{t}
\leq C\int_{\lambda |x|}^{\infty} \Big( \frac{\ln t}{t^{n-\beta\gamma}} \Big)^{\frac{1}{\gamma-1}} \,\frac{dt}{t}.
\end{align*}
In view of \eqref{log property}, sending $\lambda \longrightarrow 1$ yields 
\begin{equation}\label{J2}
\lim_{|x|\longrightarrow \infty} \frac{|x|^{\frac{n-\beta\gamma}{\gamma-1}}}{(\ln |x|)^{\frac{1}{\gamma-1}}}J_2 \leq C.
\end{equation}
Hence, \eqref{J0},\eqref{J1} and \eqref{J2} imply $v(x) \simeq |x|^{-\frac{n-\beta\gamma}{\gamma-1}}(\ln |x|)^{\frac{1}{\gamma-1}}$.
\end{proof}

\begin{proposition}\label{decay v3}
If $p(\frac{n-\beta\gamma}{\gamma-1}) - \sigma_2 < n$, then $v(x) \simeq |x|^{-\frac{p(\frac{n-\beta\gamma}{\gamma-1}) - (\beta\gamma + \sigma_2)}{\gamma-1}}$. 
\end{proposition}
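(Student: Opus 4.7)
The plan is to adapt the structure of Propositions \ref{decay v1} and \ref{decay v2}, using the two-sided bound $u(y) \simeq |y|^{-\tau}$ with $\tau \doteq \frac{n-\beta\gamma}{\gamma-1}$ from Proposition \ref{decay u}, together with the boundedness of $u$ from Theorem \ref{integrability theorem}. Set $\rho \doteq \frac{p\tau - (\beta\gamma + \sigma_2)}{\gamma-1}$; the hypothesis $p\tau - \sigma_2 < n$ is then equivalent to $\rho < \tau$, consistent with the expectation that $v$ decays strictly more slowly than $u$ in this regime.

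For the upper bound I would split $v(x) \leq C(I_1 + I_2)$ with $I_1 = \int_0^{|x|/2}$ and $I_2 = \int_{|x|/2}^\infty$. On $I_1$, whenever $y \in B_t(x)$ and $t \leq |x|/2$ one has $|y| \simeq |x|$, so $|y|^{\sigma_2} \leq C|x|^{\sigma_2}$ (using $\sigma_2 \leq 0$) and $u(y)^p \leq C|x|^{-p\tau}$ by Proposition \ref{decay u}; direct integration then produces $I_1 \leq C|x|^{-\rho}$. For $I_2$, since $B_t(x) \subset B_{|x|+t}(0)$, the inner integral splits into a piece on $B_1(0)$ (bounded by a constant via Theorem \ref{integrability theorem}) and a piece on $B_{|x|+t}(0)\setminus B_1(0)$, on which Proposition \ref{decay u} gives
\[ \int_{B_{|x|+t}(0)\setminus B_1(0)} |y|^{\sigma_2-p\tau}\,dy \leq C\int_1^{|x|+t} r^{\sigma_2-p\tau+n}\,\frac{dr}{r} \leq C(|x|+t)^{\sigma_2-p\tau+n}, \]
where $\sigma_2-p\tau+n > 0$ by hypothesis ensures the upper endpoint is dominant. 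Applying the subadditivity of $z \mapsto z^{1/(\gamma-1)}$ (valid since $\gamma \leq 2$) exactly as in Proposition \ref{decay v2}, and using $|x|+t \leq 3t$ for $t \geq |x|/2$, the tail reduces to a constant times $\int_{|x|/2}^\infty t^{(\sigma_2-p\tau+\beta\gamma)/(\gamma-1)}\,dt/t$. This converges and equals $C|x|^{-\rho}$ provided $p\tau > \beta\gamma + \sigma_2$, which is the single inequality $(n+\sigma_2)/(\gamma-1+p) < (n-\beta\gamma)/(\gamma-1)$ extracted from \eqref{non-subcritical}.

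For the lower bound I would mirror Step 1 of Proposition \ref{decay v2}. Fix $\lambda > 1$; for $t > \lambda|x|$ one has $B_{t-|x|}(0) \subset B_t(x)$, and Proposition \ref{bounded below} gives $u(y) \geq c|y|^{-\tau}$ on $|y| \geq R$, so
\[ v(x) \geq c\int_{\lambda|x|}^\infty \left( \frac{\int_R^{t-|x|} r^{\sigma_2-p\tau+n}\,\frac{dr}{r}}{t^{n-\beta\gamma}} \right)^{1/(\gamma-1)}\,\frac{dt}{t}. \]
Because $\sigma_2-p\tau+n > 0$, the inner integral is bounded below by a constant multiple of $(t-|x|)^{\sigma_2-p\tau+n}$, which in turn is bounded below by $c_\lambda t^{\sigma_2-p\tau+n}$ on the range $t > \lambda|x|$. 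The remaining $t$-integral then evaluates to $c_\lambda|x|^{-\rho}$, yielding the matching lower bound.

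The main obstacle I anticipate is the bookkeeping in $I_2$: one must simultaneously (i) invoke the subadditive inequality for $z \mapsto z^{1/(\gamma-1)}$ available only because $\gamma \in (1,2]$, (ii) recognize that under $p\tau - \sigma_2 < n$ the radial integral $\int r^{\sigma_2-p\tau+n}\,dr/r$ picks up a genuine boundary term dominating the $B_1(0)$ contribution, and (iii) verify the convergence of the final $t$-integral, which is exactly what the non-subcritical condition \eqref{non-subcritical} supplies. Once these are handled, the upper and lower bounds match and yield $v(x) \simeq |x|^{-\rho}$.
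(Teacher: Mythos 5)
Your argument is correct, but it takes a genuinely different decomposition from the paper's. You split the $t$-integral at $t = |x|/2$, exactly as in Propositions \ref{decay v1} and \ref{decay v2}, and handle the tail $I_2$ by enclosing $B_t(x) \subset B_{|x|+t}(0)$ and integrating the radial power law directly. The paper instead splits the $t$-integral on a fixed-width band $\Omega_1 = [|x|-R,|x|+R]$ about $t = |x|$, shows the contribution from $\Omega_1$ decays strictly faster than $|x|^{-\rho}$, and handles $\Omega_1^c$ by the change of variables $z = y/|x|$, $s = t/|x|$, reducing everything to verifying the finiteness of the dimensionless integral \eqref{change of variables}. Your route has the virtue of being uniform with the two preceding propositions and avoids the scaling substitution altogether; the paper's route isolates cleanly the scaling invariance of the Wolff potential, which makes the convergence condition $\rho > 0$ appear as the integrability of a fixed, $x$-independent quantity rather than via a direct $t$-power count. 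Both work, and you correctly extract $\rho > 0$ from the non-subcritical condition \eqref{non-subcritical}, which is the one real constraint needed.

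One small terminological slip: the map $z \mapsto z^{1/(\gamma-1)}$ is not subadditive for $\gamma \in (1,2]$ (there $\frac{1}{\gamma-1} \geq 1$, so the map is convex and hence \emph{superadditive}); what you want is the elementary bound $(a+b)^{\alpha} \leq 2^{\alpha-1}(a^{\alpha}+b^{\alpha})$ for $\alpha = \frac{1}{\gamma-1} \geq 1$, which is what the paper's invocation of Jensen's inequality in Proposition \ref{decay v2} delivers. This does not affect the validity of the argument. Also note that your bound $I_1 \leq C|x|^{-\rho}$ matches the target rate exactly rather than being lower-order, in contrast to the paper's $K_1$ (which is over a thin band and decays strictly faster); again this is immaterial since you only need the one-sided bound $v(x) \leq C|x|^{-\rho}$ and the matching lower bound, both of which you obtain.
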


\begin{proof}
Fix a suitable $R>0$ and let
$$\Omega_1 \doteq [|x|-R,|x|+R] \,\text{ and }\, \Omega_2 \doteq [1 - R/|x|,1 + R/|x|]$$ 
and consider the splitting
\begin{align*}
v(x) \leq {} & C\Big(\int_{\Omega_1} + \int_{\Omega_{1}^{c}} \Big) \Big( \frac{\int_{B_{t}(x)} |y|^{\sigma_2} u(y)^p \,dy}{t^{n-\beta\gamma}} \Big)^{\frac{1}{\gamma-1}} \,\frac{dt}{t} \\
\doteq {} & C(K_1 + K_2).
\end{align*}
\medskip 

\noindent{\it Step 1:} We claim that $$\lim_{|x|\longrightarrow\infty} |x|^{\frac{p(\frac{n-\beta\gamma}{\gamma-1}) - (\beta\gamma + \sigma_2)}{\gamma-1}}K_1 = 0.$$
Since $B_{t}(x) \subset B_{2t + R}(0)$ whenever $t \in \Omega_1$, Proposition \ref{decay u} implies that
\begin{align*}
K_1 \leq {} & C\int_{\Omega_1} \Big( \frac{\int_{0}^{2t+R} r^{n + \sigma_2 - p(\frac{n-\beta\gamma}{\gamma-1})} \,\frac{dr}{r}}{t^{n-\beta\gamma}} \Big)^{\frac{1}{\gamma-1}} \,\frac{dt}{t} \\
\leq {} & C|x|^{-\frac{p(\frac{n-\beta\gamma}{\gamma-1}) - (\beta\gamma + \sigma_2) + 1}{\gamma-1}}.
\end{align*}
Hence,  
$$\lim_{|x|\longrightarrow\infty} |x|^{\frac{p(\frac{n-\beta\gamma}{\gamma-1}) - (\beta\gamma + \sigma_2)}{\gamma-1}} K_1 = 0,$$
and this proves the claim.
\medskip

\noindent{\it Step 2:} We show $$\lim_{|x|\longrightarrow\infty} |x|^{\frac{p(\frac{n-\beta\gamma}{\gamma-1}) - (\beta\gamma + \sigma_2)}{\gamma-1}}K_2 = C.$$
As before, Proposition \ref{decay u} implies that for large $|x|$,
\begin{align*}
K_2 \leq {} & C\int_{\Omega_{1}^{c}} \Big( \frac{\int_{B_{t}(x)} |y|^{-p(\frac{n-\beta\gamma}{\gamma-1}) + \sigma_2} \,dy}{t^{n-\beta\gamma}} \Big)^{\frac{1}{\gamma-1}} \,\frac{dt}{t} \\
\leq {} & C|x|^{-\frac{p(\frac{n-\beta\gamma}{\gamma-1}) - (\beta\gamma + \sigma_2)}{\gamma-1}}\int_{\Omega_{2}^{c}} \Big( \frac{\int_{B_{s}(e)} |z|^{-p(\frac{n-\beta\gamma}{\gamma-1})+\sigma_2} \,dz}{s^{n-\beta\gamma}} \Big)^{\frac{1}{\gamma-1}} \,\frac{ds}{s} \\
\leq {} & C|x|^{-\frac{p(\frac{n-\beta\gamma}{\gamma-1}) - (\beta\gamma + \sigma_2)}{\gamma-1}}.
\end{align*}
Here, we have used the change of variables $z = \frac{y}{|x|}$ and $s = \frac{t}{|x|}$ and the assumption that
\begin{equation}\label{change of variables}
\int_{0}^{\infty} \Big( \frac{\int_{B_{s}(e)} |z|^{-p(\frac{n-\beta\gamma}{\gamma-1})+\sigma_2} \,dz}{s^{n-\beta\gamma}} \Big)^{\frac{1}{\gamma-1}} \,\frac{ds}{s} < \infty,
\end{equation}
where $e = x/|x|$ is a unit vector. Likewise, using the lower bound estimate of $u$ in Proposition \ref{bounded below}, we can apply similar arguments to show $$K_2 \geq C|x|^{-\frac{p(\frac{n-\beta\gamma}{\gamma-1}) - (\beta\gamma + \sigma_2)}{\gamma-1}},$$
and we deduce that
$$\lim_{|x|\longrightarrow\infty} |x|^{\frac{p(\frac{n-\beta\gamma}{\gamma-1}) - (\beta\gamma + \sigma_2)}{\gamma-1}}K_2 = C.$$
Therefore, it only remains to prove assertion \eqref{change of variables}. We do so by considering the splitting
$$\Big(\int_{0}^{1/2} + \int_{1/2}^{\infty} \Big) \Big( \frac{\int_{B_{s}(e)} |z|^{-p(\frac{n-\beta\gamma}{\gamma-1})+\sigma_2} \,dz}{s^{n-\beta\gamma}} \Big)^{\frac{1}{\gamma-1}} \,\frac{ds}{s} \doteq K_3 + K_4.$$
Since $|z| \in [1/2,3/2]$ whenever $z \in B_{s}(e)$, there holds
\begin{align*}
K_{3} \leq C\int_{0}^{1/2} \Big( \frac{|B_{s}(e)|}{s^{n-\beta\gamma}} \Big)^{\frac{1}{\gamma-1}}\,\frac{ds}{s} \leq C\int_{0}^{1/2} s^{\frac{\beta\gamma}{\gamma-1}}\,\frac{ds}{s} < \infty.
\end{align*}
On the other hand, we can certainly find a suitably large $c>0$ so that
\begin{align*}
K_4 \leq {} & C\int_{1/2}^{\infty}\Big( \frac{\int_{B_{c s}(0)} |z|^{-p(\frac{n-\beta\gamma}{\gamma-1})+\sigma_2} \,dz}{s^{n-\beta\gamma}} \Big)^{\frac{1}{\gamma-1}} \,\frac{ds}{s} \\
\leq {} & C\int_{1/2}^{\infty}\Big( \frac{\int_{0}^{c s} r^{n-p(\frac{n-\beta\gamma}{\gamma-1})+\sigma_2} \,\frac{dr}{r}}{s^{n-\beta\gamma}} \Big)^{\frac{1}{\gamma-1}} \,\frac{ds}{s} \\
\leq {} & C\int_{1/2}^{\infty} s^{-\frac{p(\frac{n-\beta\gamma}{\gamma-1}) - (\beta\gamma + \sigma_2)}{\gamma-1}} \,\frac{ds}{s} < \infty,
\end{align*}
and this completes the proof.
\end{proof}

\begin{proof}[\bf Proof of Theorem \ref{fast theorem}]
If $u,v$ are positive integrable solutions, then Theorem \ref{integrability theorem} and Propositions \ref{decay u}--\ref{decay v3} imply $u,v$ are bounded and decay with the fast rates as $|x|\longrightarrow\infty$. Conversely, assume $u,v$ are bounded and decay with the fast rates as $|x|\longrightarrow\infty$. If $u(x)$ decays with the rate $|x|^{-\frac{n-\beta\gamma}{\gamma-1}}$, then 
\begin{align*}
\int_{\mathbb{R}^n} u(x)^{r_0} \,dx \leq {} & \int_{B_{1}(0)} u(x)^{r_0} \,dx + \int_{\mathbb{R}^n \backslash B_{1}(0)} u(x)^{r_0} \,dx \\
\leq {} & C_{1} + C_{2}\int_{1}^{\infty} t^{n-(\frac{n-\beta\gamma}{\gamma-1})r_0} \,\frac{dt}{t} < \infty,
\end{align*}
since the non-subcritical condition implies $(n-\beta\gamma)r_0 > n(\gamma-1)$. Likewise, if $v(x)$ decays with the rate $|x|^{-\frac{n-\beta\gamma}{\gamma-1}}$, we can show $v \in L^{s_0}(\mathbb{R}^n)$. If $v(x)$ decays with the rate $|x|^{-\frac{n-\beta\gamma}{\gamma-1}}(\ln |x|)^{\frac{1}{\gamma-1}}$, then we can find a suitably large $R>0$ and small $\varepsilon > 0$ such that 
$$(\ln |x|)^{\frac{s_0}{\gamma-1}} \leq C|x|^{\varepsilon} \,\text{ for }\, |x| > R.$$ 
This implies 
$$ \int_{\mathbb{R}^n} v(x)^{s_0} \,dx \leq C_1 + C_{2}\int_{R}^{\infty} t^{n-(\frac{n-\beta\gamma}{\gamma-1})s_0 + \varepsilon} \,\frac{dt}{t} < \infty,$$
since $n-(\frac{n-\beta\gamma}{\gamma-1})s_0 + \varepsilon < 0$ provided $\varepsilon$ is sufficiently small. Now suppose $v(x)$ decays with the rate 
$$|x|^{-\frac{(p\frac{n-\beta\gamma}{\gamma-1}) - (\beta\gamma + \sigma_2)}{\gamma-1}}. $$ 
Since $q_0 < \frac{n-\beta\gamma}{\gamma-1}$, we obtain $p(\frac{n-\beta\gamma}{\gamma-1}) - (\beta\gamma + \sigma_2) > pq_0 - (\beta\gamma + \sigma_2) = p_0 (\gamma-1)$. From this we deduce that 
$$n - \Big(p(\frac{n-\beta\gamma}{\gamma-1}) - (\beta\gamma + \sigma_2)\Big)\frac{s_0}{\gamma-1} < 0$$ 
and thus
$$ \int_{\mathbb{R}^n} v(x)^{s_0} \,dx \leq C_1 + C_{2}\int_{1}^{\infty} t^{n- (p(\frac{n-\beta\gamma}{\gamma-1}) - (\beta\gamma + \sigma_2))\frac{s_0}{\gamma -1}} \,\frac{dt}{t} < \infty.$$
Hence, in any case, we conclude that $u,v$ are integrable solutions. This completes the proof of the theorem.
\end{proof}

\section{Proof of Corollary \ref{cor2}}\label{proof of corollary}

Let $(u,v)$ be a positive solution of system \eqref{quasilinear}. If $u,v$ are either the integrable solutions or are bounded and decay with the fast rates as $|x|\longrightarrow\infty$, then clearly
\begin{equation}\label{inf}
\inf_{\mathbb{R}^n} u = \inf_{\mathbb{R}^n} v = 0.
\end{equation} 
Thus, the potential estimate of Corollary 4.13 from \cite{KM94} ensures positive constants $C_1$  and $C_2$ such that
\begin{align*}
C_1 W_{1,\gamma}(c_{1}(y)|y|^{\sigma_1}v^q)(x) \leq u(x) \leq C_2 W_{1,\gamma}(c_{1}(y)|y|^{\sigma_1}v^q)(x),\\ \medskip
C_1 W_{1,\gamma}(c_{2}(y)|y|^{\sigma_2}u^p)(x) \leq v(x) \leq C_2 W_{1,\gamma}(c_{2}(y)|y|^{\sigma_2}u^p)(x).
\end{align*}
Since $c_{1}(x)$ and $c_{2}(x)$ are double bounded, we can then take $k_{1}(x)$ and $k_{2}(x)$ to be the double bounded functions
$$ k_{1}(x) = \frac{u(x)}{ W_{1,\gamma}(|y|^{\sigma_1}v^q)(x) } \,\text{ and }
   k_{2}(x) = \frac{v(x)}{W_{1,\gamma}(|y|^{\sigma_2}u^p)(x)} $$
so that $u,v$ satisfies the integral system
\begin{equation}\label{k system}
  \left\{\begin{array}{l}
	u(x) = k_{1}(x)W_{1,\gamma}(|y|^{\sigma_2}v^q)(x), \\
	v(x) = k_{2}(x)W_{1,\gamma}(|y|^{\sigma_2}u^p)(x). 
  \end{array}
\right.
\end{equation}
Therefore, the desired result follows immediately from Theorem \ref{fast theorem}.
\qed

\section{Proof of Theorem \ref{boundedness property}}\label{proof of theorem 3}
On the contrary, assume \eqref{cut-off} does not hold. Then there exists an increasing sequence $ r_j \longrightarrow \infty$ as $j\longrightarrow \infty$ such that if $x_{r_j}$ denotes the maximum point of $g$ in $B_{r_j}(0)\backslash B_{\frac{r_j}{2}}(0)$, then
\begin{equation*}
\lim_{j\longrightarrow\infty} g(x_{r_j}) = \infty.
\end{equation*}
Thus,
\begin{equation}\label{contradict}
v(x_{r_j}) = \frac{g(x_{r_j})}{|x_{r_j}|^{\frac{n+\sigma_1}{q}}\varphi_{r_j}(x_{r_j})} \geq \frac{c}{|x_{r_j}|^{\frac{n+\sigma_1}{q}}}.
\end{equation}
As was done in \cite{LL12}, there holds $\varphi_{r_j}(x_{r_j}) > \delta$ for some small $\delta \in (0,1)$ independent of $r_j$. Therefore, we can find a small $s>0$ such that
\begin{equation*}
\varphi_{r_j}(y) > \delta/2 \,\text{ for }\, y \in B_{s|x_{r_j}|}(x_{r_j}).
\end{equation*}
Since $g(y) \leq g(x_{r_j})$, we have
\begin{equation*}
v(y) \leq C\frac{v(x_{r_j})}{\varphi_{r_j}(y)} \leq C\delta v(x_{r_j}).
\end{equation*}
By denoting the maximum point of $u$ in $B_{s_{1}|x_{r_j}|}(x_{r_j})$ by $\overline{x}_{r_j}$ for $s_1 \in (0,s)$, which ensures $\overline{x}_{r_j}$ lies in the interior of $ B_{s|x_{r_j}|}(x_{r_j})$, we get 
$$u(y) \leq u(\overline{x}_{r_j}) \,\text{ for all }\, y \in B_{s_{2}|x_{r_j}|}(\overline{x}_{r_j}) \subset B_{s|x_{r_j}|}(x_{r_j})$$ with $s_2 \in (0,s-s_1)$. 
\medskip

\noindent{\it Step 1:} We claim that for small $\varepsilon \in (0,1)$, there is $\varepsilon_1 \geq 0$ such that for $|x_{r_j}|$ large,
\begin{equation}\label{step1}
u(\overline{x}_{r_j}) \leq \varepsilon v(x_{r_j})^{1+\varepsilon_1} + C|x_{r_j}|^{-\frac{n-\beta\gamma}{\gamma-1}}.
\end{equation}
To prove this assertion, consider the splitting of the first equation in \eqref{Wolff},
\begin{align*}
u(\overline{x}_{r_j}) \leq {} & C\Big( \int_{0}^{s_{2}|x_{r_j}|} + \int_{s_{2}|x_{r_j}|}^{\infty} \Big) \Big( \frac{\int_{B_{t}(\overline{x}_{r_j})} |y|^{\sigma_1}v(y)^q \,dy }{t^{n-\beta\gamma}} \Big)^{\frac{1}{\gamma-1}} \,\frac{dt}{t} \\
\doteq {} & C(L_1 + L_2).
\end{align*}
The second term can be directly bounded since $|y|^{\sigma_1}v^q \in L^{1}(\mathbb{R}^n)$ implies
\begin{align*}
L_{2} \leq C\int_{s_{2}|x_{r_j}|}^{\infty} t^{\frac{\beta\gamma - n}{\gamma-1}} \,\frac{dt}{t} \leq C|x_{r_j}|^{-\frac{n-\beta\gamma}{\gamma-1}}.
\end{align*}
To estimate $L_1$, let $\rho \in (0,s_{2}|x_{r_j}|)$ and consider
\begin{align*}
L_{1} \leq {} & Cv(x_{r_j})^{1+\varepsilon_1} \Big( \int_{0}^{\rho} + \int_{\rho}^{s_{2}|x_{r_j}|} \Big) \Big( \frac{\int_{B_{t}(\overline{x}_{r_j})} |y|^{\sigma_1}v(y)^{q - (1+\varepsilon_1)(\gamma-1)} \,dy }{t^{n-\beta\gamma}} \Big)^{\frac{1}{\gamma-1}} \,\frac{dt}{t} \\
\doteq {} & Cv(x_{r_j})^{1+\varepsilon_1}(L_{11}+L_{12}).
\end{align*}
Then for $|x_{r_j}|$ sufficiently large, the ground state properties of $v$ imply
\begin{equation*}
L_{11} \leq \varepsilon^{\frac{q-(\varepsilon_1 + 1)(\gamma-1)}{\gamma-1}}\int_{0}^{\rho} t^{\frac{\beta\gamma + \sigma_1}{\gamma-1}} \,\frac{dt}{t} \leq \varepsilon/2C.
\end{equation*} 
If $|x_{r_j}|$ and $\rho$ are large, H\"{o}lder's inequality and Corollary \ref{corollary} imply
\begin{align*}
\int_{B_{t}(\overline{x}_{r_j})}  {} & |y|^{\sigma_{1}}v(y)^{q - (1+\varepsilon_1)(\gamma-1)} \,dy \\
= {} & \int_{B_{t}(\overline{x}_{r_j})} |y|^{\sigma_{1} \frac{q-(1+\varepsilon_1)(\gamma-1)}{q}}v(y)^{q - (1+\varepsilon_1)(\gamma-1)} |y|^{ \frac{\sigma_{1} (1 + \varepsilon_1)(\gamma-1)}{q}} \,dy \\
\leq {} & \Big( \int_{B_{t}(\overline{x}_{r_j})} |y|^{\sigma_1}v(y)^q \,dy \Big)^{1 - (1+\varepsilon_1)(\gamma-1)/q} \Big( \int_{B_{t}(\overline{x}_{r_j})} |y|^{\sigma_1} \,dy \Big)^{\frac{(1+\varepsilon_1)(\gamma-1)}{q}} \\
\leq {} & \||y|^{\sigma_1}v^q \|_{1}^{1 - (1+\varepsilon_1)(\gamma-1)/q} t^{\frac{(n+\sigma_1)(1+\varepsilon_1)(\gamma-1)}{q}} 
\leq C t^{\frac{(n+\sigma_1)(1+\varepsilon_1)(\gamma-1)}{q}}.
\end{align*}
Therefore,
\begin{align*}
L_{12} \leq {} & C\int_{\rho}^{s_{2}|x_{r_j}|} \Big( \frac{\int_{B_{t}(\overline{x}_{r_j})} |y|^{\sigma_{1}}v(y)^{q - (1+\varepsilon_1)(\gamma-1)} \,dy }{t^{n-\beta\gamma}} \Big)^{\frac{1}{\gamma-1}} \,\frac{dt}{t} \\
\leq {} & C\int_{\rho}^{s_{2}|x_{r_j}|} t^{-\frac{n-\beta\gamma}{\gamma-1} + \frac{(n+\sigma_1)(1+\varepsilon_1)}{q}  } \,\frac{dt}{t}
\leq \varepsilon/2C
\end{align*}
since $-\frac{n-\beta\gamma}{\gamma-1} + \frac{(n+\sigma_1)(1+\varepsilon_1)}{q} < 0$ provided we choose $\varepsilon_1$ in $[0,\frac{n-\beta\gamma}{\gamma-1}\frac{q}{n+\sigma_1} - 1)$.
Hence, we arrive at $L_{1} \leq \varepsilon v(x_{r_j})^{1+\varepsilon_1}$ and combining this with the estimate for $L_2$ yields \eqref{step1}.
\medskip

\noindent{Step 2:} For large $|x_{r_j}|$, there exists $\varepsilon_2 \geq 0$ such that 
\begin{equation}\label{step2}
v(x_{r_j}) \leq u(\overline{x}_{r_j})^{\frac{1}{1+\varepsilon_1}} + C|x_{r_j}|^{-\frac{n-\beta\gamma - \varepsilon_2}{\gamma-1}}
\end{equation}
Consider the splitting of the second integral equation
\begin{align*}
v(x_{r_j}) \leq {} & C\Big( \int_{0}^{s_{1}|x_{r_j}|} + \int_{s_{1}|x_{r_j}|}^{\infty} \Big) \Big( \frac{\int_{B_{t}(x_{r_j})} |y|^{\sigma_2} u(y)^p \,dy }{t^{n-\beta\gamma}} \Big)^{\frac{1}{\gamma-1}} \,\frac{dt}{t} \\
\doteq {} & C(L_3 + L_4).
\end{align*}

Actually, if $p(\frac{n-\beta\gamma}{\gamma-1}) - \sigma_2 > n$, then we can choose $\varepsilon_1 = 0$ in \eqref{step1}. Likewise, since $q \geq p$, there holds $q(\frac{n-\beta\gamma}{\gamma-1}) - \sigma_1 > n$ and we can mimic the proof in Step 1 on the second integral equation to get estimate \eqref{step2} with $\varepsilon_1 = \varepsilon_2 = 0$. Therefore, we arrive at the estimates
\begin{equation*}
  \left\{\begin{array}{l}
    u(\overline{x}_{r_j}) \leq \varepsilon v(x_{r_j}) + C|x_{r_j}|^{-\frac{n-\beta\gamma}{\gamma-1}}, \\
    v(x_{r_j}) \leq \varepsilon u(\overline{x}_{r_j}) + C|x_{r_j}|^{-\frac{n-\beta\gamma}{\gamma-1}},
  \end{array}
\right.
\end{equation*}
and it follows that $$v(x_{r_j}) \leq  C|x_{r_j}|^{-\frac{n-\beta\gamma}{\gamma-1}}.$$
This is a contradiction with \eqref{contradict} and this completes the proof of the theorem for this case. Hence, in view of this, we restrict our attention to the case where $p(\frac{n-\beta\gamma}{\gamma-1}) - \sigma_2 \leq n$. 
\medskip

\noindent (i) First, let $p(\frac{n-\beta\gamma}{\gamma-1}) - \sigma_2 = n$ and we estimate $L_3$ and $L_4$ from above. By definition of $\overline{x}_{r_j}$, we have
\begin{align*}
L_{3} \leq {} &  u(\overline{x}_{r_j})^{\frac{1}{1+\varepsilon_1}} \Big(\int_{0}^{\rho} + \int_{\rho}^{s_{1}|x_{r_j}|} \Big) \Big( \frac{\int_{B_{t}(x_{r_j})} |y|^{\sigma_2} u(y)^{p - \frac{\gamma-1}{1+\varepsilon_1}} \,dy }{t^{n-\beta\gamma}} \Big)^{\frac{1}{\gamma-1}} \,\frac{dt}{t} \\
\doteq {} & u(\overline{x}_{r_j})^{\frac{1}{1+\varepsilon_1}} (L_{31} + L_{32}).
\end{align*}
By virtue of the boundedness and decaying property of $u$, we obtain for $|x_{r_j}|$ large that
$$ L_{31} \leq \varepsilon^{\frac{p(1+\varepsilon_1) - (\gamma-1)}{(1+\varepsilon_1)(\gamma-1)}} \int_{0}^{\rho} t^{\frac{\beta\gamma}{\gamma-1}} \,\frac{dt}{t} \leq \varepsilon/2C.$$
For any given $\varepsilon_1 > 0$ and because $p(\frac{n-\beta\gamma}{\gamma-1}) - \sigma_2 = n$, we can show
\begin{equation}\label{tedious1}
\frac{n(\gamma-1)}{n-\beta\gamma} < \frac{n(p - \frac{\gamma-1}{1+\varepsilon_1})}{\beta\gamma + \sigma_2}.
\end{equation}
Hence, we may choose $\ell$ in the interval
$ \Big( \frac{n(\gamma-1)}{n-\beta\gamma}, \frac{n(p - \frac{\gamma-1}{1+\varepsilon_1})}{\beta\gamma + \sigma_1} \Big)$
so that H\"{o}lder's inequality and Theorem \ref{integrability theorem} yield
\begin{align*}
\int_{B_{t}(x_{r_j})} |y|^{\sigma_2}u(y)^{p-\frac{\gamma-1}{1+\varepsilon_1}}\,dy 
\leq {} & Ct^{\sigma_1}\|u\|_{\ell}^{p-\frac{\gamma-1}{1+\varepsilon_1}} |B_{t}(x_{r_j})|^{1 - \frac{p(1+\varepsilon_1) - (\gamma-1)}{(1+\varepsilon_1)\ell}} \\
\leq {} & Ct^{n + \sigma_1 - a},
\end{align*}
where $a \doteq \frac{n}{\ell}\frac{p(1+\varepsilon_1) - (\gamma-1)}{1+\varepsilon_1}$ and $\rho \leq t \leq s_{1}|x_{r_j}|$.
Therefore, $\beta\gamma + \sigma_1 - a < 0$ and we get
$$ L_{32} \leq C\int_{\rho}^{s_{1}|x_{r_j}|} t^{\frac{ \beta\gamma + \sigma_1 - a }{\gamma-1}} \,\frac{dt}{t} \leq \varepsilon/2C. $$
Now, it is simple to show that
\begin{equation}\label{tedious2}
-\frac{\sigma_2(\gamma-1)}{n-\beta\gamma} < \frac{np}{\beta\gamma + \sigma_2} -p.
\end{equation}
Thus, we can choose $\varepsilon_{2}'$ in $(-\frac{\sigma_2(\gamma-1)}{n-\beta\gamma},\frac{np}{\beta\gamma + \sigma_2} - p)$ so that $n+ \sigma_2 (1+ p/\varepsilon_{2}') > 0$.
If $t \geq s_{1}|x_{r_j}|$ and $y \in B_{t}(x_{r_j})$, then $|y| \leq t + |x_{r_j}| \leq (1+1/s_{1})t$ and thus $B_{t}(x_{r_j}) \subset B_{(1+1/s_{1})t}(0) \equiv B_{ct}(0)$. Thus, H\"{o}lder's inequality and Theorem \ref{integrability theorem} imply
\begin{align*}
\int_{B_{t}(x_{r_j})} |y|^{\sigma_{2}} u(y)^p \,dy 
\leq {} & \int_{B_{ct}(0)} |y|^{\sigma_{2}} u(y)^p \,dy \\
\leq {} & \|u\|_{p+\varepsilon_{2}'}^{p}\Big( \int_{B_{ct}(0)} |y|^{\sigma_2 (1 + p/\varepsilon_{2}')} \,dy \Big)^{\frac{\varepsilon_{2}'}{p + \varepsilon_{2}'}} \\
\leq {} & C\Big( \int_{0}^{ct} r^{n + \sigma_2 (1 + p/\varepsilon_{2}')} \,\frac{dr}{r} \Big)^{\frac{\varepsilon_{2}'}{p + \varepsilon_{2}'}} \\
\leq {} & C t^{\frac{n\varepsilon_{2}'}{p + \varepsilon_{2}'} +\sigma_2 } = C t^{n+\sigma_2 - \frac{np}{p+\varepsilon_{2}'}}.
\end{align*}
Hence,
\begin{equation*} 
L_{4} \leq C \int_{s_{1}|x_{r_j}|}^{\infty} t^{\frac{\beta\gamma + \sigma_2 - np/(p+\varepsilon_{2}') }{\gamma-1}} \,\frac{dt}{t} \leq C|x_{r_j}|^{-\frac{n -\beta\gamma - \varepsilon_2}{\gamma-1}},
\end{equation*}
where $\varepsilon_2 = \frac{n\varepsilon_{2}'}{p+ \varepsilon_{2}'} + \sigma_2$. Combining these estimates for $L_3$ and $L_4$ give us \eqref{step2}.
\medskip

\noindent (ii) Let $p(\frac{n-\beta\gamma}{\gamma-1}) - \sigma_2 < n$. We estimate $L_3$ and $L_4$ in this case. The estimate for $L_3$ follows just as in the previous case provided we choose $\varepsilon_1$ to be in the interval
$$ \Big(\frac{(n+\sigma_2)(\gamma-1) - p(n-\beta\gamma)}{p(n-\beta\gamma)-(\beta\gamma+\sigma_2)(\gamma-1)}, \frac{q(n-\beta\gamma)}{(n+\sigma_1)(\gamma-1)}-1 \Big). $$
This is possible since the non-subcritical condition yields $q_0 < \frac{n-\beta\gamma}{\gamma-1}$ which, after some direct calculations, implies that
$$\frac{(n+\sigma_2)(\gamma-1) - p(n-\beta\gamma)}{p(n-\beta\gamma)-(\beta\gamma+\sigma_2)(\gamma-1)} < \frac{q(n-\beta\gamma)}{(n+\sigma_1)(\gamma-1)}-1.$$
Meanwhile, this choice for $\varepsilon_1$ implies \eqref{tedious1} and the same arguments apply thereafter. Let us now estimate $L_4$ for this case. By the non-subcritical condition and $p(\frac{n-\beta\gamma}{\gamma-1}) - \sigma_2 < n$, we can show
$$ p \in \Big( \frac{(\gamma-1)(\beta\gamma + \sigma_2)}{n-\beta\gamma}, \frac{(n+\sigma_2)(\gamma-1)}{n-\beta\gamma} \Big)$$
and thus $\frac{n(\gamma-1)}{n-\beta\gamma} < \frac{np}{\beta\gamma + \sigma_2}$. Since $n > \beta\gamma$ we have that $\frac{np}{\beta\gamma +\sigma_2} > \frac{np}{n+\sigma_2}$. Therefore, we can choose $\varepsilon_{2}' >0$ in the interval
\begin{equation}\label{tedious3}
\Big(\frac{np}{n + \sigma_2} - \frac{n(\gamma-1)}{n-\beta\gamma},\, \frac{np}{\beta\gamma + \sigma_2} - \frac{n(\gamma-1)}{n-\beta\gamma} \Big)
\end{equation} 
so that for $t \geq s_{1}|x_{r_j}|$, H\"{o}lder's inequality and Theorem \ref{integrability theorem} imply
\begin{align*}
\int_{B_{t}(x_{r_j})} {} &  |y|^{\sigma_{2}} u(y)^p \,dy 
\leq \int_{B_{ct}(0)} |y|^{\sigma_{2}} u(y)^p \,dy \\
\leq {} & \|u\|_{\frac{n(\gamma-1)}{n-\beta\gamma} + \varepsilon_{2}'}^{p}\Big( \int_{B_{ct}(0)} |y|^{\sigma_2 \frac{n(\gamma-1)+ \varepsilon_{2}'(n-\beta\gamma)}{n(\gamma-1)+ \varepsilon_{2}'(n-\beta\gamma) - p(n-\beta\gamma)}} \,dy \Big)^{1-\frac{p(n-\beta\gamma)}{n(\gamma-1)+ \varepsilon_{2}'(n-\beta\gamma)}} \\
\leq {} & C\Big( \int_{0}^{ct} r^{n + \sigma_2 \frac{n(\gamma-1)+ \varepsilon_{2}'(n-\beta\gamma)}{n(\gamma-1)+ \varepsilon_{2}'(n-\beta\gamma) - p(n-\beta\gamma)}} \,\frac{dr}{r} \Big)^{1-\frac{p(n-\beta\gamma)}{n(\gamma-1)+ \varepsilon_{2}'(n-\beta\gamma)}} \\
\leq {} &  C t^{n+\sigma_2 - \frac{np(n-\beta\gamma)}{n(\gamma-1)+ \varepsilon_{2}'(n-\beta\gamma)}}.
\end{align*}
Hence,
\begin{equation*}
L_{4} \leq C\int_{s_{1}|x_{r_j}|}^{\infty} t^{\frac{\beta\gamma +\sigma_2 - \frac{np(n-\beta\gamma)}{n(\gamma-1)+ \varepsilon_{2}'(n-\beta\gamma)}}{\gamma-1} } \,\frac{dt}{t} \leq C|x_{r_j}|^{-\frac{n-\beta\gamma - \varepsilon_2 }{\gamma-1}},
\end{equation*}
where $$\varepsilon_{2} = n + \sigma_2 - \frac{np(n-\beta\gamma)}{n(\gamma-1) + \varepsilon_{2}' (n-\beta\gamma)}$$
and $\varepsilon_{2} > 0$ due to \eqref{tedious3}. Thus, combining the estimates for $L_3$ and $L_4$ leads to \eqref{step2}.
\medskip

\noindent{\it Step 3:} (iii) Let $p(\frac{n-\beta\gamma}{\gamma-1}) - \sigma_2 = n$. Choose $\varepsilon_1 \in (0,\frac{q(n-\beta\gamma)}{(n+\sigma_1)(\gamma-1)}-1)$ so that estimate \eqref{step1} holds. Applying estimate \eqref{step2} to estimate \eqref{step1} yields
\begin{align}\label{absorb}
u(\overline{x}_{r_j}) \leq {} & \varepsilon \Big( u(\overline{x}_{r_j})^{\frac{1}{1+\varepsilon_1}} + C|x_{r_j}|^{-\frac{n-\beta\gamma-\varepsilon_2}{\gamma-1}} \Big)^{1+\varepsilon_1} + C|x_{r_j}|^{-\frac{n-\beta\gamma}{\gamma-1}} \notag \\
\leq {} & C\varepsilon u(\overline{x}_{r_j}) + C|x_{r_j}|^{-\frac{(1+\varepsilon_1)(n-\beta\gamma-\varepsilon_2)}{\gamma-1}} + C|x_{r_j}|^{-\frac{n-\beta\gamma}{\gamma-1}} \notag \\
\leq {} & C\varepsilon u(\overline{x}_{r_j}) + C|x_{r_j}|^{-\frac{n-\beta\gamma}{\gamma-1}}
\end{align}
since $$\frac{(1+\varepsilon_1)(n-\beta\gamma-\varepsilon_2)}{\gamma-1} > \frac{n-\beta\gamma}{\gamma-1}$$
provided we choose $\varepsilon_{2}'$ in $(-\frac{\sigma_2 (\gamma-1)}{n-\beta\gamma}, \,\min\lbrace \frac{(n+\sigma_2)\varepsilon_1 (\gamma-1) - \sigma_2 (1+\varepsilon_1)p}{n+\beta\gamma\varepsilon_1 + (1+\varepsilon_1)\sigma_2}, \frac{np}{\beta\gamma + \sigma_2}-p \rbrace)$. Note that this is possible due to \eqref{tedious2} and the fact that 
\begin{equation*}
-\frac{\sigma_2(\gamma-1)}{n-\beta\gamma} < \frac{(n+\sigma_2)(\gamma-1)\varepsilon_1 - \sigma_2 (1+\varepsilon_1)p}{n+\beta\gamma\varepsilon_1 + (1+\varepsilon_1)\sigma_2}.
\end{equation*}
After absorbing the first term on the right-hand side of \eqref{absorb} by the left-hand side, we get the estimate
$$ u(\overline{x}_{r_j}) \leq C|x_{r_j}|^{-\frac{n-\beta\gamma}{\gamma-1}}.$$
Applying this estimate to estimate \eqref{step2} yields
$$ v(x_{r_j}) \leq C|x_{r_j}|^{-\frac{n-\beta\gamma}{(\gamma-1)(1+\varepsilon_1)}},$$
but this contradicts with \eqref{contradict} in view of $\varepsilon_1 < \frac{q(n-\beta\gamma)}{(n+\sigma_1)(\gamma-1)} - 1$.
\medskip

\noindent (iv) If $p(\frac{n-\beta\gamma}{\gamma-1}) - \sigma_2 < n$, we can adopt the same arguments as in part (iii) to arrive at a contradiction provided we choose $\varepsilon_1$ to be in the interval
$$ \Big(\frac{(n+\sigma_2)(\gamma-1) - p(n-\beta\gamma)}{p(n-\beta\gamma)-(\beta\gamma+\sigma_2)(\gamma-1)}, \frac{q(n-\beta\gamma)}{(n+\sigma_1)(\gamma-1)}-1 \Big) $$
and we choose a positive and suitably small $\varepsilon_{2}' < \frac{(n+\sigma_2)\varepsilon_{1}(\gamma-1) - \sigma_{2}(1+\varepsilon_1)p}{n + \beta\gamma \varepsilon_1 + (1+\varepsilon_1)\sigma_2}$ which also belongs to the interval \eqref{tedious3}. This completes the proof of the theorem. \qed

\medskip

\small
\noindent{\bf Acknowledgment:} 
This work was completed during a visiting appointment at the University of Oklahoma, and the author would like to express his sincere appreciation to the university and the Department of Mathematics for their hospitality.

\footnotesize


\end{document}